\newtheorem{theorem}{Theorem}
\newtheorem{definition}[theorem]{Definition}
\newtheorem{example}[theorem]{Example}
\newtheorem{lemma}[theorem]{Lemma}
\newtheorem{proposition}[theorem]{Proposition}
\newtheorem{remark}[theorem]{Remark}
\newenvironment{proof}[1][Proof]{\noindent\textbf{#1.} }{\ \rule{0.5em}{0.5em}}
\journal{  Journal }
\begin{document}

\begin{frontmatter}

\title{Discrete delayed perturbation of Mittag-Leffler function and its application to linear fractional delayed difference system}

\author[mysecondaryaddress]{Mustafa AYDIN\corref{mycorrespondingauthor}}

\cortext[mycorrespondingauthor]{Corresponding author}
\ead{m.aydin@yyu.edu.tr}

\author[mymainaddress,mymainaddress3]{Nazim I. MAHMUDOV}






\address[mysecondaryaddress]{Department of Medical Services and Techniques, Muradiye Vocational School,  Van Yuzuncu Yil University, Van , Turkey}
\address[mymainaddress]{Department of Mathematics, Eastern Mediterranean University, Famagusta 99628 T.R. North Cyprus, Turkey}
\address[mymainaddress3]{Research Center of Econophysics, Azerbaijan State University of Economics(UNEC), Istiqlaliyyat Str. 6., Baku 1001, Azerbaijan}

\begin{abstract}
The linear nonhomogeneous fractional difference system with constant coefficients is introduced. An explicit solution to the system is acquired by proposing a newly discrete retarded perturbation of the nabla Mittag-Leffer-type function containing such matrix equations that provide non-permutability. A couple of special cases obtained from our results are discussed.
\end{abstract}

\begin{keyword}
discrete delayed perturbation, fractional difference, nabla Mittag-Leffler, linear system, time-delay
\end{keyword}

\end{frontmatter}

\linenumbers

\section{ Introduction}
In the last two decades, fractional differential equations have been proved to be a powerful tool for lots of phenomena in the scientific world together with the usage of the areas such as mathematical physics\cite{k1}\cite{k2}, biology, computed tomography, diffusion, biophysics, signal process, engineering, electrochemistry, control theory\cite{k3}-\cite{k7}, etc.

Retardation is mostly related to chemical processes, economics, heredity in population, hydraulic and electrical networks. Generally a hallmark of the corresponding mathematical structure is that the rate of change of these courses depends on past history. Any differential equation consisting of at least one delay is known as a delayed differential equation. Combining a delayed differential equation with a fractional differential equation gives rise to highly realistic models for various systems having memory such as stabilization, automatic steering, control, and so on. Some articles about fractional delayed differential/difference equations can be found in the references\cite{k3k},\cite{ii1}-\cite{i6},\cite{i11},\cite{i3},\cite{c11},\cite{c16}.

As it is well-known, $z(t)=e^{Mt}z(0)$ is a solution to a linear system $z^{'}(t)=Mz(t), \  t\geq 0$, where $e^{Mt}$ is the exponential matrix function. Sometimes it is not so easy to find a solution to some linear systems like the following linear delayed system
\begin{equation}
\left\{
\begin{array}
[c]{l}%
 z^{'} \left( t\right) =Mz\left(  t\right)  +Nz\left(
t-r\right),\ \ t\in (0,T]  ,\\
\ \ \ \ z\left(  t\right)  =\phi\left(  t\right)  ,\ \ t\in[-r,0], \ \ r>0,\\
\end{array}
\right.  \label{x1}%
\end{equation}
where $M$ and $N$ are square matrices. Under the condition that $M$ and $N$ are commutative,  Khusainov and Shuklin\cite{i4} give a representation of solutions to system (\ref{x1}) by defining delayed exponential matrix function. Li and Wang\cite{i5} examine the fractional version of the same system with $M=\Theta$. Mahmudov\cite{i6} manages to acquire a solution to the delayed Caputo fractional differential system which is more general version of system (\ref{x1}) having a linear function  by proposing delay perturbation of Mittag-Leffler(DPML) matrix function.

 We observe the similar process for difference systems. Dibl\'ik  and Khusainov \cite{i7} \cite{iek7} focus on the linear discrete system having only one retardation and give its representation of solutions by introducing delayed discrete exponential matrix function $e_{h}^{Nk}$.  Dibl\'ik and Morávková\cite{i9}\cite{i10} find a representation of solutions of linear two-retarded discrete systems by extending the delayed discrete exponential matrix function to the delayed discrete exponential matrix function for two delays. Pospíšil\cite{i11} concentrates on the linear discrete multi-delayed system with commutative coefficient matrices and solves this system by using $Z$-transform. Mahmudov\cite{i12}  removes the commutativity in the same linear discrete multi-delayed system by proposing the delayed perturbation of discrete exponential matrix function $X_{h}^{M,N}\left(k\right)$ with the same method.

Jia et al. in the study\cite{i1}  prove that the M-L function $\mathbb{E}_{b,\beta,\beta-1}(k,\rho(a))$ $=\sum_{i=0}^{\infty}b^i H_{i\beta+\beta-1}\left(k,\rho(a)\right)$ is the unique solution to  the following the nabla Riemann Liouville fractional difference equation
\begin{equation}
\left\{
\begin{array}
[c]{l}%
\nabla_{\rho(a)}^\beta z \left( k\right) =bz\left(  k\right), \ \ \beta\in(0,1)     ,\ \ k\in\mathbb{N}_{a+1}  ,\\
\ \ \ \ z\left(  a\right)  =\frac{1}{1-b}>0.\\
\end{array}
\right.  \label{j1}%
\end{equation}
Jia et al. in the study\cite{i2}  prove that the M-L function $\mathbb{E}_{b,\beta,0}(k,a)=\sum_{i=0}^{\infty}b^i H_{i\beta}\left(k,a\right)$ is the unique solution to  the following the nabla Caputo fractional difference equation
\begin{equation}
\left\{
\begin{array}
[c]{l}%
^C\nabla_{a}^\beta z \left( k\right) =bz\left(  k\right), \ \ \beta\in(0,1)     ,\ \ k\in\mathbb{N}_{a+1}  ,\\
\ \ \ \ z\left(  a\right)  =1.\\
\end{array}
\right.  \label{j2}%
\end{equation}
Du and Lu in the work\cite{i3} consider the following nonhomogeneous delayed Riemann Liouville fractional difference system
\begin{equation}
\left\{
\begin{array}
[c]{l}%
\nabla_{-r}^\alpha z \left( k\right) =Nz\left(
k-r\right)  +\daleth\left(  k  \right)   ,\ \ k\in\mathbb{N}_1  ,\\
\ \ \ \ z\left(  k\right)  =\phi\left(  k\right)  ,\ \ k\in\mathbb{N}_{1-r}^0, \ \ r>0,\\
\end{array}
\right.  \label{DL3}%
\end{equation}
and present representation of solutions to system (\ref{DL3}) by proposing the discrete delayed M-L type matrix function $\mathbb{F}_r^{Nk^{\overline{\alpha}}}$, and examined its finite-time stability.

As stated in the reference\cite{yeniek1}, the Riemann Liouville has widespread usage in the real-wold problems. In the recent times, the Riemann Liouville fractional derivative presents an excellent tool to express anomalous diffusion, Levy flights, and so forth. The set of all functions which satisfy the definition of Riemann Liouville fractional derivative is bigger than the ones which fulfill the definition of the Caputo fractional derivative. Moreover, the RL fractional derivative is closer to the classical one than the Caputo one in terms of their features and similarities.

Motivated by the above-cited works and the aforementioned superiorities of both the fractional-order derivative and the one in the sense of Riemann Liouville, this current paper is dedicated to the exploration of the following linear retarded Riemann Liouville fractional difference system,
\begin{equation}
\left\{
\begin{array}
[c]{l}%
\nabla_{-r}^\alpha z \left( k\right) =Mz\left(  k\right)  +Nz\left(
k-r\right)  +\daleth\left(  k  \right)   ,\ \ k\in\mathbb{N}_1  ,\\
\ \ \ \ z\left(  k\right)  =\phi\left(  k\right)  ,\ \ k\in\mathbb{N}_{1-r}^0, \ \ r>0,\\
\end{array}
\right.  \label{sorkok2}%
\end{equation}
where $\nabla_{-r}^\alpha$ is the Riemann-Liouville fractional difference of order $0<\alpha<1$, $z:\mathbb{N}_1\rightarrow\mathbb{R}^n$, $\daleth:\mathbb{N}_1\rightarrow\mathbb{R}^n$ is a continuous function, $r \in \mathbb{N}_2$ is a retardation, $M, N \in \mathbb{R}^{n\times n}$ are constant coefficient matrices, $\phi:\mathbb{N}_{1-r}^0\rightarrow\mathbb{R}^n$ is an initial conditional function.

The major contributions are highlighted below.

\begin{itemize}
\item we introduce the nabla Riemann Liouville fractional delayed difference system with noncommutative coefficient matrices(see Section 2);
\item we newly define the discrete delay perturbation of the nabla M-L type matrix function, investigate some of its features, establish a couple of its relations with the available ones in the literature, graphically collate it with a matter of them(see Section 3);
\item we research for a representation of solutions to the nabla Riemann Liouville fractional delayed difference system with noncommutative coefficient matrices in the homogeneous and nonhomogeneous forms(see Section 4);
\item we share some valuable results(see Section 5);
\item we express a few of open problems(see Section 6).
\end{itemize}

\section{ Preliminaries }
In this section, we present the available tools in the literature.

$\mathbb{N}^a=\left\{ \dots, a-2, a-1, a \right\}$, $\mathbb{N}_a=\left\{ a, a+1, a+2, \dots\right\}$, $\mathbb{N}_a^b=\left\{ a, a+1, a+2, \dots, b\right\}$ where $a,b \in \mathbb{R}$ with $b-a \in \mathbb{N}_1 $.

\begin{definition}\cite[Definition 3.4]{p1}
  The generalized rising function is defined by
  \begin{equation*}
    k^{\bar{r}}=\frac{\Gamma(k+r)}{\Gamma(k)},
  \end{equation*}
whenever the right-hand side of this equation is sensible for those values of $k$ and $r$.
\end{definition}

\begin{definition}\cite[Definition 3.56]{p1}
Assume that $\alpha\notin \mathbb{N}^{-1}$. The $\alpha$-th order (nabla) fractional Taylor monomial $H_\alpha(k,a)$ is defined by
  \begin{equation*}
    H_\alpha(k,a)=\frac{(k-a)^{\bar{\alpha}}}{\Gamma(\alpha+1)},
  \end{equation*}
where the right-hand side of the above equation is sensible.
\end{definition}

In the following definitions, we present the (nabla) Leibniz Formula, the (nabla) fractional sum with respect to the (nabla) fractional Taylor monomial and the (nabla) Riemann-Liouville fractional difference, respectively.

\begin{theorem}\cite[Theorem 3.41]{p1}\label{lemmaleibniz}
Assume  $z:\mathbb{N}_{a}\times\mathbb{N}_{a+1}\rightarrow\mathbb{R}$. Then
\begin{equation*}
\nabla\left(\int_{a}^{k} z(k,s)\nabla s \right) =\int_{a}^{k} \nabla_k z(k,s)\nabla s+ z(\rho(k),k), \ \  k\in \mathbb{N}_{a+1}.
\end{equation*}
\end{theorem}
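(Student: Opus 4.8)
The plan is to unwind both sides into finite sums and verify the identity by a direct index manipulation, since the nabla integral is by definition a sum. Writing $\rho(k)=k-1$ for the backward jump and recalling that $\int_{a}^{k} f(s)\,\nabla s=\sum_{s=a+1}^{k}f(s)$ while $\nabla g(k)=g(k)-g(\rho(k))$, I would first set $F(k)=\int_{a}^{k} z(k,s)\,\nabla s=\sum_{s=a+1}^{k}z(k,s)$ and stress that the two arguments of $z$ play different roles: the outer variable $k$ appears both as the upper summation limit and inside the summand. This is exactly the discrete counterpart of the classical Leibniz integral rule, in which differentiating such an object produces one contribution from the moving limit of integration and one from differentiating the integrand.

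Next I would compute the nabla difference straight from the definition,
\[
\nabla F(k)=F(k)-F(\rho(k))=\sum_{s=a+1}^{k}z(k,s)-\sum_{s=a+1}^{k-1}z(k-1,s).
\]
The core step is to reconcile the mismatched upper limits. I would extend the second sum up to $s=k$ by adding and subtracting its boundary term, that is, $\sum_{s=a+1}^{k-1}z(k-1,s)=\sum_{s=a+1}^{k}z(k-1,s)-z(k-1,k)$. Substituting this and combining the two sums term by term over the common range $a+1\le s\le k$ yields
\[
\nabla F(k)=\sum_{s=a+1}^{k}\bigl(z(k,s)-z(k-1,s)\bigr)+z(k-1,k).
\]

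Finally I would reinterpret the result in nabla-calculus notation: the summand $z(k,s)-z(k-1,s)$ is precisely $\nabla_{k} z(k,s)$, so the sum equals $\int_{a}^{k}\nabla_{k} z(k,s)\,\nabla s$, and the leftover boundary term $z(k-1,k)$ is $z(\rho(k),k)$, which gives the claimed identity. The only delicate point, and the step I expect to need the most care, is the bookkeeping of the summation limits: one must notice that $\nabla F$ compares $F(k)$ with $F(k-1)$, so the integrand in the second sum is evaluated with first argument $k-1$, and this is exactly why the boundary term emerges as $z(\rho(k),k)$ rather than $z(k,k)$. Keeping the two occurrences of the outer variable separate throughout the manipulation is what makes the correct boundary term appear.
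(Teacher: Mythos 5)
Your proof is correct. The paper does not prove this statement itself --- it is quoted as Theorem 3.41 of the cited reference \cite{p1} --- and your argument (unwinding $\int_a^k z(k,s)\nabla s$ into the sum $\sum_{s=a+1}^{k}z(k,s)$, applying $\nabla F(k)=F(k)-F(k-1)$, and padding the second sum with the boundary term $z(k-1,k)=z(\rho(k),k)$) is precisely the standard direct computation used there; the domain $\mathbb{N}_a\times\mathbb{N}_{a+1}$ guarantees $z(k-1,s)$ is defined, and the empty-sum convention covers the base case $k=a+1$.
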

\begin{definition}\cite[Definition 3.58]{p1}
  Let $z:\mathbb{N}_{a+1}\rightarrow\mathbb{R}$ and $\alpha>0$. Then the (nabla) fractional sum is given by
  \begin{equation*}
    \nabla_a^{-\alpha}z(k)=\int_{a}^{k} H_{\alpha-1}(k,\rho(s))z(s)\nabla s =\sum_{s=a+1}^{k} H_{\alpha-1}(k,\rho(s))z(s), \ \ k\in \mathbb{N}_a,
  \end{equation*}
  where $\rho(s)=s-1$.
\end{definition}

\begin{definition}\cite[Definition 3.61, Theorem 3.62]{p1}
Let $z:\mathbb{N}_{a+1}\rightarrow\mathbb{R}$. Then the (nabla) Riemann-Liouville fractional difference of order $0<\alpha<1$ is given by
 \begin{equation*}
    \nabla_a^{\alpha}z(k)=\nabla \nabla_a^{-(1-\alpha)}z(k)= \int_{a}^{k} H_{-\alpha-1}(k,\rho(s))z(s)\nabla s =\sum_{s=a+1}^{k} H_{-\alpha-1}(k,\rho(s))z(s),
  \end{equation*}
for $k\in \mathbb{N}_a$.
\end{definition}

The composition of two different (nabla) fractional sums is given in the below formula of the statement of the coming theorem.
\begin{theorem}\cite[Theorem 3.107]{p1}\label{composition1}
Let $z:\mathbb{N}_{a+1}\rightarrow\mathbb{R}$ and $\alpha, \beta > 0$. Then
\begin{equation*}
   \nabla_a^{-\alpha}\nabla_a^{-\beta}z(k)=\nabla_a^{-\alpha-\beta}z(k), \ \ k\in \mathbb{N}_a.
\end{equation*}
\end{theorem}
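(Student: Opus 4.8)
The plan is to prove the semigroup (composition) property by writing out both fractional sums explicitly, interchanging the order of the resulting double summation, and then reducing the whole claim to a single discrete convolution identity for the fractional Taylor monomials, which in turn follows from the Chu--Vandermonde identity. First I would unfold the definitions. Since $\nabla_a^{-\beta}z(s)=\sum_{\tau=a+1}^{s}H_{\beta-1}(s,\rho(\tau))z(\tau)$, substituting into the outer fractional sum gives
\[
\nabla_a^{-\alpha}\nabla_a^{-\beta}z(k)=\sum_{s=a+1}^{k}H_{\alpha-1}(k,\rho(s))\sum_{\tau=a+1}^{s}H_{\beta-1}(s,\rho(\tau))z(\tau).
\]
Next I would interchange the order of summation over the triangular index set $\{(s,\tau):a+1\le\tau\le s\le k\}$, rewriting it so that $\tau$ ranges over $\mathbb{N}_{a+1}^{k}$ and, for each fixed $\tau$, $s$ ranges over $\mathbb{N}_{\tau}^{k}$. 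This yields
\[
\nabla_a^{-\alpha}\nabla_a^{-\beta}z(k)=\sum_{\tau=a+1}^{k}z(\tau)\Bigg(\sum_{s=\tau}^{k}H_{\alpha-1}(k,\rho(s))H_{\beta-1}(s,\rho(\tau))\Bigg).
\]
Comparing this with $\nabla_a^{-\alpha-\beta}z(k)=\sum_{\tau=a+1}^{k}H_{\alpha+\beta-1}(k,\rho(\tau))z(\tau)$, the theorem reduces to the single monomial convolution identity
\[
\sum_{s=\tau}^{k}H_{\alpha-1}(k,\rho(s))H_{\beta-1}(s,\rho(\tau))=H_{\alpha+\beta-1}(k,\rho(\tau)).
\]

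To establish this identity I would shift the summation index by setting $j=s-\tau$ and $n=k-\tau$, and expand each Taylor monomial via the rising-factorial form $H_{\gamma}(k,a)=(k-a)^{\bar\gamma}/\Gamma(\gamma+1)=\Gamma(k-a+\gamma)/\big(\Gamma(\gamma+1)\Gamma(k-a)\big)$. Each factor then becomes a generalized binomial coefficient, so the left-hand side turns into $\sum_{j=0}^{n}\binom{n-j+\alpha-1}{n-j}\binom{j+\beta-1}{j}$, while the right-hand side becomes $\binom{n+\alpha+\beta-1}{n}$. These two expressions agree precisely by the Chu--Vandermonde convolution $\sum_{j=0}^{n}\binom{r+n-j}{n-j}\binom{s+j}{j}=\binom{r+s+n+1}{n}$ applied with $r=\alpha-1$ and $s=\beta-1$.

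I expect this convolution identity to be the only genuine obstacle; the interchange of summations and the reindexing are routine, and the boundary case $k=a$ holds trivially since both sides collapse to an empty sum. The technical care needed lies in translating cleanly between the Gamma-function representation of the monomials and the generalized binomial coefficients so that the Chu--Vandermonde identity applies verbatim, and in verifying that under the hypotheses $\alpha,\beta>0$ all Gamma arguments stay within their domain of sensibility (in particular that none of the monomials is evaluated at a forbidden negative-integer order).
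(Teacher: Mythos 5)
Your argument is correct. Note first that the paper does not actually prove this statement: Theorem \ref{composition1} is imported verbatim from \cite[Theorem 3.107]{p1}, so there is no in-paper proof to measure you against. That said, your route is exactly consonant with how the paper handles the neighbouring results: the interchange of the double sum over the triangular region $\{(s,\tau): a+1\le\tau\le s\le k\}$ is the same Fubini-type step used in the proof of Lemma \ref{composition2}, and your key convolution identity
\[
\sum_{s=\tau}^{k}H_{\alpha-1}(k,\rho(s))\,H_{\beta-1}(s,\rho(\tau))=H_{\alpha+\beta-1}(k,\rho(\tau))
\]
is precisely the first identity of Lemma \ref{lemma2} with base point $a=\rho(\tau)$, which the paper itself establishes by converting the Taylor monomials to generalized binomial coefficients and invoking Chu--Vandermonde, just as you do. (So you could shorten your write-up by citing Lemma \ref{lemma2} instead of re-deriving the convolution.) Your form of the Vandermonde convolution $\sum_{j=0}^{n}\binom{r+n-j}{n-j}\binom{s+j}{j}=\binom{r+s+n+1}{n}$ with $r=\alpha-1$, $s=\beta-1$ checks out, the Gamma arguments are all legitimate since $\alpha,\beta>0$ and $k-s\ge 0$, and the empty-sum base case $k=a$ is handled. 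The only gain of the book's alternative (nabla Laplace transform) proof is brevity; your elementary combinatorial argument is self-contained and arguably better suited to this paper's toolkit.
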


In the reference\cite{p1}, the proof of the following lemma is done via the nabla Laplace integral transform. We will prove the lemma in a clear and simple way, compared to the available studies in the literature.
\begin{lemma}\label{lemma2}
  Let $\alpha>0$ and $\beta\in\mathbb{R}$ such that $\beta-1$ and $\alpha+\beta-1$ are nonnegative integers. Then
  \begin{equation*}
    \nabla_a^{-\alpha}H_{\beta-1}(k,a)=H_{\alpha+\beta-1}(k,a), \ \ k \in \mathbb{N}_a,
  \end{equation*}
and
  \begin{equation*}
    \nabla_a^{\alpha}H_{\beta-1}(k,a)=H_{\alpha-\beta-1}(k,a), \ \ k \in \mathbb{N}_a.
  \end{equation*}
\end{lemma}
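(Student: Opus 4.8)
The plan is to derive both identities from a single base computation and then propagate it with the composition rule of Theorem~\ref{composition1} and the definition $\nabla_a^{\alpha}=\nabla\,\nabla_a^{-(1-\alpha)}$. Two elementary monomial relations are needed, each a one-line consequence of $\Gamma(x+1)=x\Gamma(x)$ applied to $k^{\bar r}=\Gamma(k+r)/\Gamma(k)$: the upper-index difference $\nabla_k H_\gamma(k,a)=H_{\gamma-1}(k,a)$, and the lower-index difference $\nabla_s\bigl[-H_\alpha(k,s)\bigr]=H_{\alpha-1}(k,\rho(s))$. The second relation, being opposite in sign to the first, is the heart of the argument and must be checked with care.

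For the base case I would compute $\nabla_a^{-\alpha}H_0(k,a)$, where $H_0(k,a)\equiv 1$. Writing out the fractional sum from the definition and telescoping (the fundamental theorem of the nabla calculus, $\sum_{s=a+1}^{k}\nabla_s F(s)=F(k)-F(a)$ with $F(s)=-H_\alpha(k,s)$),
\[
\nabla_a^{-\alpha}H_0(k,a)=\sum_{s=a+1}^{k}H_{\alpha-1}(k,\rho(s))=\sum_{s=a+1}^{k}\nabla_s\bigl[-H_\alpha(k,s)\bigr]=H_\alpha(k,a)-H_\alpha(k,k)=H_\alpha(k,a),
\]
since $H_\alpha(k,k)=0^{\bar\alpha}/\Gamma(\alpha+1)=0$ for $\alpha>0$. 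This is the step I expect to be the main obstacle: producing the antidifference $-H_\alpha(k,s)$ in the summation variable, and verifying that the endpoint term vanishes.

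The general first identity then follows with no further summation. For $\beta>1$ I would read the base case (with order $\beta-1$) as $H_{\beta-1}=\nabla_a^{-(\beta-1)}H_0$ and invoke Theorem~\ref{composition1}:
\[
\nabla_a^{-\alpha}H_{\beta-1}(k,a)=\nabla_a^{-\alpha}\nabla_a^{-(\beta-1)}H_0(k,a)=\nabla_a^{-(\alpha+\beta-1)}H_0(k,a)=H_{\alpha+\beta-1}(k,a),
\]
and the case $\beta=1$ is the base case itself. For the Riemann--Liouville difference I would unwind the definition $\nabla_a^{\alpha}=\nabla\,\nabla_a^{-(1-\alpha)}$ (legitimate since $0<\alpha<1$, so $1-\alpha>0$), apply the first identity with order $1-\alpha$ to obtain $\nabla_a^{-(1-\alpha)}H_{\beta-1}(k,a)=H_{\beta-\alpha}(k,a)$, and difference once by the upper-index relation to reach $\nabla_a^{\alpha}H_{\beta-1}(k,a)=\nabla_k H_{\beta-\alpha}(k,a)=H_{\beta-\alpha-1}(k,a)$. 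Throughout one needs only $\alpha>0$ together with the admissibility of the orders that appear, so that each $H$ is defined and the boundary contributions vanish.
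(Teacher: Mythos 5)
Your argument is correct, but it takes a genuinely different route from the paper's. The paper rewrites $H_{\beta-1}(k,a)$ as the generalized binomial coefficient $\binom{k-a-1+\beta-1}{k-a-1}$, expresses $\nabla_a^{-\alpha}H_{\beta-1}(k,a)$ as the convolution sum $\sum_{s=0}^{k-a-1}\binom{k-a-1-s+\alpha-1}{k-a-1-s}\binom{s+\beta-1}{s}$, and collapses it in one stroke with the Chu--Vandermonde identity, leaving the second identity to ``the similar manner.'' You instead prove only the base case $\nabla_a^{-\alpha}H_0=H_\alpha$ by exhibiting the antidifference $-H_\alpha(k,s)$ in the summation variable and telescoping, then bootstrap to general $\beta$ by writing $H_{\beta-1}=\nabla_a^{-(\beta-1)}H_0$ and invoking the semigroup law of Theorem~\ref{composition1}, and you get the second identity by unwinding $\nabla_a^{\alpha}=\nabla\nabla_a^{-(1-\alpha)}$ and differencing once in $k$. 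The paper's computation is a single self-contained combinatorial identity valid for arbitrary real orders; yours trades that identity for two elementary $\Gamma$-function manipulations plus a citation of the composition theorem, at the cost of needing $\beta\geq 1$ so that $H_{\beta-1}$ is an iterated sum of the constant function --- which the lemma's hypotheses guarantee. One point to flag explicitly: your derivation yields $\nabla_a^{\alpha}H_{\beta-1}(k,a)=H_{\beta-\alpha-1}(k,a)$, whereas the lemma as printed asserts $H_{\alpha-\beta-1}(k,a)$; these agree only when $\alpha=\beta$. The version you obtain is the one actually used later in the paper (e.g.\ $\nabla_{(j-1)r}^{\alpha}H_{i\alpha+\alpha-1}=H_{i\alpha-1}$ in the proof of Theorem~\ref{maintheorem1}), so the printed subscript appears to be a typo and your computation gives the intended statement.
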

\begin{proof}
We firstly find an equality of $H_{\beta-1}(k,a)$ using (generalized) binomial coefficients.
\begin{equation*}
 H_{\beta-1}(k,a)=\frac{(k-a)^{\overline{\beta-1}}}{\Gamma(\beta)}=\frac{\Gamma(k-a+\beta-1)}{\Gamma(k-a)\Gamma(\beta)}=\binom{k-a-1+\beta-1}{k-a-1}.
\end{equation*}
Using the above equality and the definition of the (nabla) fractional sum, we acquire
\begin{align*}
\nabla_a^{-\alpha}H_{\beta-1}(k,a)   & = \int_{a}^{k} H_{\alpha-1}(k,\rho(s))H_{\beta-1}(s,a)\nabla s  \\
   & =\sum_{s=a+1}^{k} H_{\alpha-1}(k,\rho(s))H_{\beta-1}(s,a) \\
      & =\sum_{s=a+1}^{k} \binom{k-s+\alpha-1}{k-s}\binom{s-a-1+\beta-1}{s-a-1} \\
            & =\sum_{s=0}^{k-a-1} \binom{k-a-1-s+\alpha-1}{k-a-1-s}\binom{s+\beta-1}{s},
\end{align*}
Applying Chu-Vandermonde identity to the above expression, we get
\begin{align*}
\nabla_a^{-\alpha}H_{\beta-1}(k,a)   & = \int_{a}^{k} H_{\alpha-1}(k,\rho(s))H_{\beta-1}(s,a)\nabla s  \\
   & =\binom{k-a-1+(\alpha+\beta-1)-1}{k-a-1}=H_{\alpha+\beta-1}(k,a).
\end{align*}
The second one can be proved in the similar manner.
\end{proof}

The composition of the (nabla) fractional sum and  the (nabla) Riemann-Liouville fractional difference is given in the next lemma. A special case of the lemma is examined in reference\cite{p1} and it is proved via integration by parts. So we prove it in a different way.
\begin{lemma}\label{composition2}
  For $0<\beta<1$ and $t \in \mathbb{N}_{a+1}$, we have
  \begin{equation*}
    \nabla_a^{-\beta}  \nabla_{\rho(a)}^{\beta}z(k)=z(k)-H_{\beta-1}(k,\rho(a))z(a).
  \end{equation*}
\end{lemma}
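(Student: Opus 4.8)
The plan is to reduce the stated \emph{mismatched-base} composition (the fractional sum is based at $a$ while the Riemann--Liouville difference is based at $\rho(a)=a-1$) to a \emph{matched-base} composition plus one explicit boundary term, and then to collapse the matched-base composition to the identity. First I would record the elementary splitting identity obtained by peeling off the $s=a$ summand of the base-$\rho(a)$ fractional sum: for any $w$,
\[
\nabla_a^{-\beta}w(k)=\nabla_{\rho(a)}^{-\beta}w(k)-H_{\beta-1}(k,\rho(a))\,w(a),
\]
which is immediate from the defining sums since $\rho(a)+1=a$. Applying this with $w=\nabla_{\rho(a)}^{\beta}z$ turns the left-hand side of the lemma into $\nabla_{\rho(a)}^{-\beta}\nabla_{\rho(a)}^{\beta}z(k)-H_{\beta-1}(k,\rho(a))\,[\nabla_{\rho(a)}^{\beta}z](a)$.

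Next I would dispatch the boundary factor by a one-term evaluation: $[\nabla_{\rho(a)}^{\beta}z](a)$ is a single summand $H_{-\beta-1}(a,\rho(a))z(a)$, and a direct computation with the rising function gives $H_{-\beta-1}(a,\rho(a))=1^{\overline{-\beta-1}}/\Gamma(-\beta)=1$, so this contribution is exactly $H_{\beta-1}(k,\rho(a))z(a)$, matching the term sought in the statement.

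The core of the argument is then to show that the matched-base composition is the identity, $\nabla_{\rho(a)}^{-\beta}\nabla_{\rho(a)}^{\beta}z(k)=z(k)$. Writing the Riemann--Liouville difference as $\nabla_{\rho(a)}^{\beta}z=\nabla\,\nabla_{\rho(a)}^{-(1-\beta)}z$ and abbreviating $g=\nabla_{\rho(a)}^{-(1-\beta)}z$, I would use the Leibniz formula (Theorem~\ref{lemmaleibniz}) to commute the outer $\nabla$ through the order-$\beta$ sum, i.e. $\nabla_{\rho(a)}^{-\beta}\nabla g=\nabla\,\nabla_{\rho(a)}^{-\beta}g$; the boundary contribution produced by Leibniz vanishes because $H_{\beta-1}(\rho(k),\rho(k))=0$ and because the base-$\rho(a)$ sum makes $g(\rho(a))$ an empty sum equal to $0$. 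Once the two fractional sums are adjacent I would invoke the semigroup property (Theorem~\ref{composition1}) to merge them, $\nabla_{\rho(a)}^{-\beta}\nabla_{\rho(a)}^{-(1-\beta)}z=\nabla_{\rho(a)}^{-1}z$, and finish by observing that $\nabla\nabla_{\rho(a)}^{-1}z(k)=z(k)$, since the order-one sum $\nabla_{\rho(a)}^{-1}z(k)=\sum_{s=a}^{k}z(s)$ telescopes under $\nabla$.

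I expect the delicate step to be the commutation of $\nabla$ with the fractional sum, which is where all the lower-limit bookkeeping lives. It rests on two boundary evaluations of the fractional Taylor monomials --- $H_{\beta-1}(\rho(k),\rho(k))=0$ (so no spurious term is created when $\nabla$ passes inside) and the empty-sum value $g(\rho(a))=0$ --- and on keeping the base points $a$ and $\rho(a)$ strictly separate, since it is precisely the discrepancy between them that produces the surviving term $H_{\beta-1}(k,\rho(a))z(a)$. If one prefers to avoid Leibniz, the same commutation can be verified by nabla summation by parts together with the monomial difference rule $\nabla_s H_{\beta-1}(k,\rho(s))=-H_{\beta-2}(k,\rho(\rho(s)))$; either way the computation is routine once the boundary values are pinned down, and the semigroup property (Theorem~\ref{composition1}) then does the rest.
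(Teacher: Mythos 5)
Your argument is correct, but it follows a genuinely different route from the paper's. The paper proves the lemma by brute-force expansion: it writes $\nabla_a^{-\beta}\nabla_{\rho(a)}^{\beta}z(k)$ as a double sum, adds and subtracts the $s=a$ term so that both sums share the base $\rho(a)$, interchanges the order of summation, and then collapses the inner convolution via the identity $\sum_{s=i}^{k}H_{\beta-1}(k,\rho(s))H_{-\beta-1}(s,\rho(i))=H_{-1}(k,\rho(i))$, which vanishes for $i<k$ and equals $1$ at $i=k$; the correction term comes out as $-H_{\beta-1}(k,\rho(a))H_{-\beta-1}(a,\rho(a))z(a)=-H_{\beta-1}(k,\rho(a))z(a)$, exactly the boundary evaluation you also perform. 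This is the same Chu--Vandermonde computation as in Lemma~\ref{lemma2}, pushed one step further to $H_{-1}$, and it is deliberately chosen to avoid summation by parts (the authors remark that the textbook proof goes through integration by parts and that they want to do otherwise). You instead isolate the base mismatch first via the splitting identity $\nabla_a^{-\beta}w=\nabla_{\rho(a)}^{-\beta}w-H_{\beta-1}(k,\rho(a))w(a)$, reduce to the matched-base statement $\nabla_{\rho(a)}^{-\beta}\nabla_{\rho(a)}^{\beta}z=z$, and prove that by commuting $\nabla$ past the fractional sum and invoking Theorem~\ref{composition1} together with the telescoping of $\nabla\nabla_{\rho(a)}^{-1}$. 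Your route is more structural and makes it transparent that the term $H_{\beta-1}(k,\rho(a))z(a)$ is produced entirely by the discrepancy between the bases $a$ and $\rho(a)$; the price is that it leans on an auxiliary commutation identity not listed among the paper's preliminaries, and that identity is really a summation-by-parts statement --- the Leibniz formula of Theorem~\ref{lemmaleibniz} by itself only moves $\nabla_k$ onto the kernel, yielding $\nabla\nabla_{\rho(a)}^{-\beta}g=\nabla_{\rho(a)}^{-(\beta-1)}g$, so to transfer the difference onto $g$ and pick up the boundary term $-H_{\beta-1}(k,\rho(a))g(\rho(a))$ you do need the Abel-summation step you mention as an alternative. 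Since you correctly identify both boundary evaluations ($H_{\beta-1}(\rho(k),\rho(k))=0$ and $g(\rho(a))=0$), this is a presentational looseness rather than a gap, and both proofs arrive at the same conclusion.
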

\begin{proof}
By using definitions of $ \nabla_a^{-\beta}$ and $ \nabla_a^{\beta}$, we acquire
\begin{align*}
  \nabla_a^{-\beta}  \nabla_{\rho(a)}^{\beta}z(k)  &= \nabla_a^{-\beta}  \left( \sum_{s=a}^{k} H_{-\beta-1}(k,\rho(s))z(s) \right)  \\
   & = \sum_{s=a+1}^{k} H_{\beta-1}(k,\rho(s)) \sum_{i=a}^{s} H_{-\beta-1}(s,\rho(i))z(i)\\
   & = \sum_{s=a}^{k} H_{\beta-1}(k,\rho(s)) \sum_{i=a}^{s} H_{-\beta-1}(s,\rho(i))z(i)\\
   &\ \ - H_{\beta-1}(k,\rho(a)) \sum_{i=a}^{a} H_{-\beta-1}(a,\rho(i))z(i)\\
    & = \sum_{i=a}^{k}z(i) \sum_{s=i}^{k}H_{\beta-1}(k,\rho(s)) H_{-\beta-1}(s,\rho(i))\\
   &\ \ - H_{\beta-1}(k,\rho(a)) H_{-\beta-1}(a,\rho(a))z(a)\\
& = \sum_{i=a}^{k}z(i) \int_{\rho(i)}^{k}H_{\beta-1}(k,\rho(s)) H_{-\beta-1}(s,\rho(i))\nabla s\\
&\ \ - H_{\beta-1}(k,\rho(a)) z(a)\\
  & = \sum_{i=a}^{k} H_{-1}(k,\rho(i))z(i) - H_{\beta-1}(k,\rho(a)) z(a) \\
    & =z(k) - H_{\beta-1}(k,\rho(a)) z(a),
\end{align*}
where $H_{-1}(k,\rho(i))=0$ for $a \leq i \leq k-1$ and $H_{-1}(k,\rho(k))=1$.
\end{proof}

\section{Discrete DPML function}
In this section, the discrete DPML function is newly presented to obtain the exact analytical solution formulas of the homogeneous and nonhomogeneous delayed Riemann-Liouville fractional difference system. Some of its properties to be implemented in the forthcoming proofs are debated.

For noncommutative coefficient constant matrices, the perturbational matrix equation embed in the discrete DPML matrix function  is used in the following definition.
\begin{definition}\label{d-ddp}
The discrete DPML matrix function  ${\mathbb{D}_{\alpha,\beta,r}^{M,N}}$ generated by $M,N$ is defined as follows
{\footnotesize\begin{equation}
{\mathbb{D}_{\alpha,\beta,r}^{M,N}}\left(  k\right)  :=\left\{
\begin{tabular}
[c]{ll}%
$\Theta,$ & $k \in\mathbb{N}^{-r-1},$\\
$I,$ & $k=-r,$\\
${\displaystyle\sum\limits_{i=0}^{\infty}} A^i \dfrac{\left(  k+r\right)  ^{\overline{i\alpha+\beta-1}}%
}{\Gamma\left(  i\alpha+\beta\right)  },$ & $k\in\mathbb{N}_{1-r}^0,$\\
$%
{\displaystyle\sum\limits_{i=0}^{\infty}}
{\displaystyle\sum\limits_{j=0}^{p}}
Q\left( i+1, j\right)  \dfrac{\left(  k-(j-1)r\right)  ^{\overline{i\alpha+\beta-1}}%
}{\Gamma\left(  i\alpha+\beta\right)  },$ & $k\in\mathbb{N}_{(p-1)r+1}^{pr}$,%
\end{tabular}
\ \ \ \ \ \ \ \right.  \label{ddp}%
\end{equation}}
where $\Theta$ and $I$ is the zero and identity matrices,respectively and the matrix equation $Q\left( i, j\right)$ is of the following recursive form
\begin{equation}\label{mainmatrixpart1}
  Q\left( i+1, j\right)=MQ\left( i, j\right)+NQ\left( i, j-1\right), \
\end{equation}
and
\begin{equation}\label{mainmatrixpart2}
   Q(0,j)=Q(i,-1)=\Theta, \ Q(1,0)=I,
\end{equation}
for $i,j\in \mathbb{N}_0.$
\end{definition}
\begin{remark}
  It must be indicated that $Q\left( i, j\right)$ was exploited in the refences\cite{i8} and \cite{d1} to define delayed perturbation of discrete matrix exponential and DPML matrix function, respectively. By employing the above recursive equation, one can easily reach to the explicit form in the below table:
  \footnotesize{\[%
\begin{tabular}
[c]{|c|c|c|c|c|c|c|}\hline
& $j=0$ & $j=1$ & $j=2$ & $j=3$ & $\cdots$ & $j=p$\\\hline
$Q\left( 0, j\right)  $ & $I$ & $\Theta$ & $\Theta$ & $\Theta$ & $\cdots$ & $\Theta%
$\\\hline
$Q\left( 1, j\right)  $ & $M$ & $N$ & $\Theta$ & $\Theta$ & $\cdots$ & \\\hline
$Q\left( 2, j\right)  $ & $M^2$ & $MN+NM$ & $N^{2}$ & $\Theta$ & $\cdots$ & $\Theta$\\\hline
$Q\left(3, j\right)  $ & $M^3$ & $M(MN+NM)+NM^2$ & $MN^2+N(MN+NM)$ & $N^{3}$ &
$\cdots$ & \\\hline
$\cdots$ & $\cdots$ & $\cdots$ & $\cdots$ &  & $\cdots$ & $\Theta$\\\hline
$Q\left( p, j\right)  $ & $M^p$ & $\Theta$ & $\Theta$ & $\Theta$ &
$\cdots$ & $N^{p}$\\\hline
\end{tabular}
\
\]}
\end{remark}
We can reexpress the discrete DPML matrix function  ${\mathbb{D}_{\alpha,\beta,r}^{M,N}}$ in terms of the (nabla) fractional Taylor monomial as follows $H_\alpha(k,a)$.
{\footnotesize\begin{equation}
{\mathbb{D}_{\alpha,\beta,r}^{M,N}}\left(  k\right)  :=\left\{
\begin{tabular}
[c]{ll}%
$\Theta,$ & $k \in\mathbb{N}^{-r-1},$\\
$I,$ & $k=-r,$\\
${\displaystyle\sum\limits_{i=0}^{\infty}} M^i H_{i\alpha+\beta-1}\left(k,-r\right),$ & $k\in\mathbb{N}_{1-r}^0,$\\
$%
{\displaystyle\sum\limits_{i=0}^{\infty}}
{\displaystyle\sum\limits_{j=0}^{p}}
Q\left( i+1, j\right) H_{i\alpha+\beta-1}\left(k,(j-1)r\right)  ,$ & $k\in\mathbb{N}_{(p-1)r+1}^{pr}$.%
\end{tabular}
\ \ \ \ \ \ \ \right.  \label{ddp2}%
\end{equation}}

For commutative constant coefficient matrices, the following lemma is debated.

\begin{lemma}
Under the commutativity of the constant coefficient matrices $M$ ad $N$, we have
\begin{equation*}
  Q\left( i, j\right)=\binom{i}{j}M^{i-j}N^j, \ \ i,j \in \mathbb{N}_0.
\end{equation*}
\end{lemma}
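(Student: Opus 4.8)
The plan is to prove the closed form by induction on the first index $i$, treating the claimed identity $Q(i,j)=\binom{i}{j}M^{i-j}N^{j}$ as a single statement quantified over all $j\in\mathbb{N}_0$ for each fixed $i$. This is the natural induction because the recursion (\ref{mainmatrixpart1}) expresses the entire row $Q(i+1,\cdot)$ in terms of the preceding row $Q(i,\cdot)$, so one level determines the next uniformly in $j$.

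For the base case I would check $i=0$ directly against the boundary data: reading off the explicit table, $Q(0,0)=I$ and $Q(0,j)=\Theta$ for $j\ge 1$, which agrees with $\binom{0}{j}M^{-j}N^{j}$ since $\binom{0}{0}=1$ and $\binom{0}{j}=0$ for $j\ge1$. Before running the induction I would also fix the conventions at the edges of the $j$-range so that they are compatible with the matrix boundary conditions in (\ref{mainmatrixpart2}): namely $\binom{i}{-1}=0$ matches $Q(i,-1)=\Theta$, and $\binom{i}{j}=0$ for $j>i$ matches the vanishing upper-triangular entries of the table (there $M^{i-j}$ carries a formally negative exponent, but the zero binomial coefficient annihilates the term, so no genuine inverse of $M$ is ever required).

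For the inductive step, assuming the formula at level $i$ for all $j$, I would substitute into (\ref{mainmatrixpart1}) to obtain
\begin{equation*}
Q(i+1,j)=M\binom{i}{j}M^{i-j}N^{j}+N\binom{i}{j-1}M^{i-j+1}N^{j-1}.
\end{equation*}
This is the one place where the hypothesis of the lemma is genuinely used: commutativity $MN=NM$ lets me pull the leading $N$ of the second term through $M^{i-j+1}$, rewriting $N\,M^{i-j+1}N^{j-1}=M^{i-j+1}N^{j}$, so both monomials collapse onto the common factor $M^{(i+1)-j}N^{j}$. Factoring it out gives
\begin{equation*}
Q(i+1,j)=\Big[\binom{i}{j}+\binom{i}{j-1}\Big]M^{(i+1)-j}N^{j},
\end{equation*}
and Pascal's rule $\binom{i}{j}+\binom{i}{j-1}=\binom{i+1}{j}$ closes the step, yielding $Q(i+1,j)=\binom{i+1}{j}M^{(i+1)-j}N^{j}$.

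The computation is routine, so the only real difficulty I anticipate is bookkeeping rather than a genuine obstacle: making sure the degenerate values $j=0$ (where the $N$-term drops out via $Q(i,-1)=\Theta$) and $j>i$ (where both sides vanish) are absorbed by the binomial conventions above, and being explicit that commutativity is precisely the ingredient that permits the two operator monomials to be merged. Without it the ``coefficients'' would remain the noncommutative sums exhibited in the general table (for instance $MN+NM$ and $M(MN+NM)+NM^2$) rather than collapsing to the scalar binomials $\binom{i}{j}$.
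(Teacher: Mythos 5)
Your proof is correct, and its algebraic core --- substituting the closed form into the recursion (\ref{mainmatrixpart1}), using $MN=NM$ to rewrite $N\,M^{i-j+1}N^{j-1}$ as $M^{i-j+1}N^{j}$, and closing with Pascal's rule --- is exactly the computation the paper performs. The difference is in how the induction is organized: the paper inducts on $j$ with $i$ left free, whereas you induct on $i$ with the claim quantified over all $j$. Your choice is the logically tighter one. In the paper's step for $j=n+1$ the recursion produces the term $Q(i-1,n+1)$, which sits at the \emph{same} $j$-level currently being proved and is therefore not covered by the hypothesis ``true for $j=n$''; the paper simply substitutes the claimed formula for it, so its induction as literally stated only works if read as a double induction (or as an induction on $i$, which is what you do). Your version avoids the issue entirely, since $Q(i+1,j)=MQ(i,j)+NQ(i,j-1)$ refers only to row $i$, which your hypothesis covers for every $j$ simultaneously. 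One small caveat you share with the paper: your base case reads $Q(0,0)=I$ off the table, which is what $\binom{0}{0}M^{0}N^{0}=I$ requires, but the displayed boundary conditions (\ref{mainmatrixpart2}) literally say $Q(0,j)=\Theta$ and $Q(1,0)=I$; the table, the lemma, and the paper's own proof are consistent only under the normalization $Q(0,0)=I$, so it is worth saying explicitly that this is the convention in force. Your handling of the degenerate indices via $\binom{i}{-1}=0$ and $\binom{i}{j}=0$ for $j>i$ is careful and correct.
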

\begin{proof}
We apply the mathematical induction on $j\in\mathbb{N}_0$ to the recursive equation to prove this lemma.
Let us see that it is true for $j=0$.
\begin{align*}
  Q\left( i, 0\right)&=MQ\left( i-1, 0\right)+NQ\left( i-1, -1\right)  \\
   &= MQ\left( i-1, 0\right)=M^i=\binom{i}{0}M^{i-0}N^0.
\end{align*}
Assume that it is true for $j=n$, that is,
\begin{equation*}
  Q\left( i, n\right)=\binom{i}{n}M^{i-n}N^n.
\end{equation*}
Let us check its validity for $j=n+1$.
\begin{align*}
  Q\left( i, n+1\right)&=MQ\left( i-1, n+1\right)+NQ\left( i-1, n\right)  \\
   &= M\binom{i-1}{n+1}M^{i-1-n-1}N^{n+1}+N\binom{i-1}{n}M^{i-1-n}N^{n}\\
   &=\left[\binom{i-1}{n+1}+\binom{i-1}{n}\right]M^{i-1-n}N^{n+1}\\
   &=\binom{i}{n+1}M^{i-(n+1)}N^{n+1}.
\end{align*}
\end{proof}

In the following remark, it is seen that the discrete DPML matrix function we define reduces to the available famous functions in the literature depending on the special choices of the parameters.
\begin{remark}
Let ${\mathbb{D}_{\alpha,\beta,r}^{M,N}}$  be defined by (\ref{ddp}). Then the following assertions hold true.
\begin{enumerate}
  \item  if $\alpha=\beta=1$, $r=h+1$, then ${\mathbb{D}_{\alpha,\beta,h}^{M,N}}\left(k\right)=X_{h}^{M,N}\left(k\right)$,
  \item if $M=\Theta$, $r=h+1$, and $\alpha=\beta=1$, then ${\mathbb{D}_{\alpha,\beta,h}^{M,N}}\left(k\right)=e_{h}^{Nk}$,
  \item if $M=\Theta$  and $\alpha=\beta$, then ${\mathbb{D}_{\alpha,\beta,r}^{M,N}}\left(k\right)=\mathbb{F}_r^{Nk^{\overline{\alpha}}}$,
  \item if $N=\Theta$, then ${\mathbb{D}_{\alpha,\beta,r}^{M,N}}\left(k\right)=\mathbb{E}_{M,\alpha,\beta-1}(k,-r)$,
  \item if $\alpha=\beta=1$, $r=h+1$, and $MN=NM$, then
  ${\mathbb{D}_{\alpha,\beta,h}^{M,N}}\left(k\right)=e^{Mk}e_{h}^{N_1(k-h)}$,
  where $N_1=e^{-Mh}N$.
\end{enumerate}
\end{remark}
This remark can be easily confirmed by keeping aforesaid definitions in mind.

\begin{remark}
  Comparison of the discrete delay perturbation of (nabla) M-L type matrix function $ {\mathbb{D}_{\alpha,\beta,r}^{M,N}}\left(k\right)$, the (nabla) M-L matrix function $\mathbb{E}_{M,\alpha,\beta}(k,r)$, and discrete delayed M-L-type matrix function $ \mathbb{F}_r^{Nk^{\overline{\alpha}}}$ is presented in Figure \ref{comparison_of_functions}.
\end{remark}
\begin{figure}
  \centering
  \includegraphics[height=7cm, width=12cm]{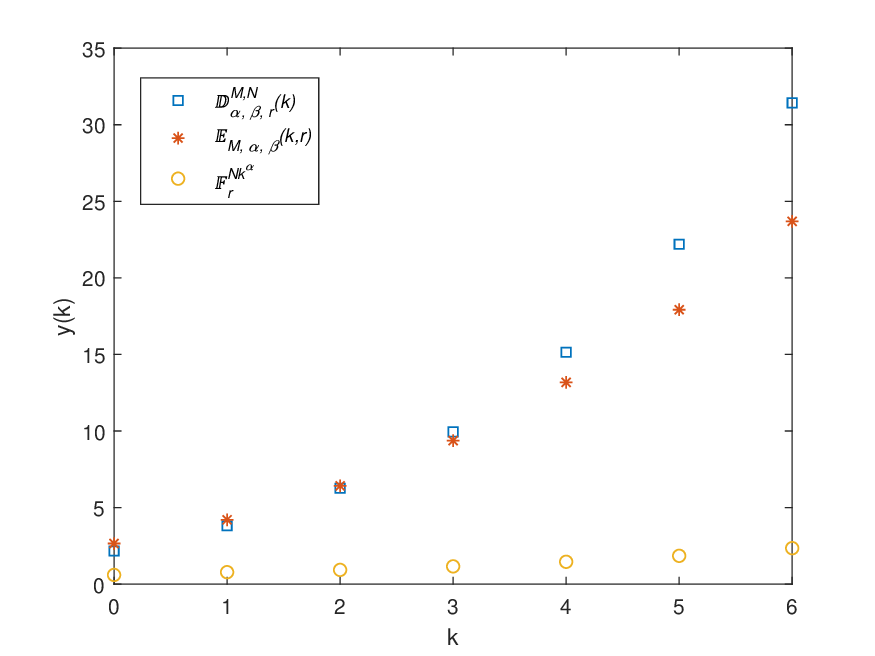}
  \caption{Comparison of functions $ {\mathbb{D}_{\alpha,\beta,r}^{M,N}}\left(k\right)$, $\mathbb{E}_{M,\alpha,\beta}(k,r)$, and $ \mathbb{F}_r^{Nk^{\overline{\alpha}}}$ for $\alpha=0.9$, $\beta=0.6$, $M=5$, $N=3$, $r=2$.}\label{comparison_of_functions}
\end{figure}

\section{The explicit solution of RL fractional retarded difference system }
In this section, our aim is to investigate an explicit solution to the linear Riemann Liouville fractional retarded difference system by dividing into three subsections.

We share a couple of  main theorems to achieve our objective. When it comes to most of their proofs related to natural numbers, we use the method of mathematical induction for them, which is the most useful and powerful proof technique on the natural numbers.

\subsection{Homogeneous case:}
In this subsection, we firstly consider the following homogeneous Riemann Liouville fractional delayed difference system
\begin{equation}
\left\{
\begin{array}
[c]{l}%
\nabla_{-r}^\alpha z \left( k\right) =Mz\left(  k\right)  +Nz\left(
k-r\right),\ \ k\in\mathbb{N}_1  ,\\
\ \ \ \ z\left(  k\right)  =\mathbb{E}_{M,\alpha,\alpha-1}\left(k,-r \right)  ,\ \ k\in\mathbb{N}_{1-r}^0, \ \ r>0,\\
\end{array}
\right.  \label{sorkokhomogeneous}%
\end{equation}
where $\nabla_{-r}^\alpha$ is the Riemann-Liouville fractional difference of order $0<\alpha<1$, $z:\mathbb{N}_1\rightarrow\mathbb{R}^n$,  $r \in \mathbb{N}_2$ is a retardation, $M, N \in \mathbb{R}^{n\times n}$ are constant coefficient matrices.

\begin{theorem}\label{maintheorem1}
  The discrete DPML matrix function  ${\mathbb{D}_{\alpha,\beta,r}^{M,N}}$ satisfies homogeneous system (\ref{sorkokhomogeneous}) in the case $\alpha=\beta$.
\end{theorem}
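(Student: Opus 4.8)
The plan is to fix $\beta=\alpha$ and verify the two requirements of system (\ref{sorkokhomogeneous}) separately: the initial data on $\mathbb{N}_{1-r}^0$ and the fractional difference equation on $\mathbb{N}_1$. The first is immediate: on $\mathbb{N}_{1-r}^0$ the representation (\ref{ddp2}) reads $\mathbb{D}_{\alpha,\alpha,r}^{M,N}(k)=\sum_{i=0}^{\infty}M^i H_{i\alpha+\alpha-1}(k,-r)$, which is exactly $\mathbb{E}_{M,\alpha,\alpha-1}(k,-r)$, so the initial condition holds by definition. The real work is the difference equation, and the first thing I would do is record a single global series for the solution. Because the Taylor monomial $H_{(i+1)\alpha-1}(k,(j-1)r)$ vanishes whenever $k\le (j-1)r$, the piecewise formula (\ref{ddp2}) collapses, for every $k\ge 1-r$, into the one expression $\mathbb{D}_{\alpha,\alpha,r}^{M,N}(k)=\sum_{i=0}^{\infty}\sum_{j\ge 0}Q(i+1,j)H_{(i+1)\alpha-1}(k,(j-1)r)$, in which only finitely many $j$ contribute. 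Having one formula valid across all blocks is what makes a term-by-term computation legitimate.

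Next I would apply $\nabla_{-r}^\alpha$ term by term. The delicate point is that the operator is based at $-r$ while the $j$-th monomial is anchored at $(j-1)r$. I would resolve this by the same vanishing: writing the Riemann--Liouville difference as $\sum_{s=-r+1}^{k}H_{-\alpha-1}(k,\rho(s))(\cdot)$ and using $H_{(i+1)\alpha-1}(s,(j-1)r)=0$ for $s\le (j-1)r$, the lower summation limit may be raised to $s=(j-1)r+1$, so that $\nabla_{-r}^\alpha$ acts on each term exactly as the base-matched operator $\nabla_{(j-1)r}^\alpha$. The second identity of Lemma \ref{lemma2} then lowers the index by $\alpha$, giving $\nabla_{-r}^\alpha H_{(i+1)\alpha-1}(k,(j-1)r)=H_{i\alpha-1}(k,(j-1)r)$, and hence $\nabla_{-r}^\alpha\mathbb{D}_{\alpha,\alpha,r}^{M,N}(k)=\sum_{i=0}^{\infty}\sum_{j\ge 0}Q(i+1,j)H_{i\alpha-1}(k,(j-1)r)$.

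Finally I would match this against the right-hand side. Using the block description of (\ref{ddp2}) together with the shift $H_{(i+1)\alpha-1}(k-r,(j-1)r)=H_{(i+1)\alpha-1}(k,jr)$, the combination $M\mathbb{D}_{\alpha,\alpha,r}^{M,N}(k)+N\mathbb{D}_{\alpha,\alpha,r}^{M,N}(k-r)$ becomes $\sum_{i}\sum_{j}\left(MQ(i+1,j)+NQ(i+1,j-1)\right)H_{(i+1)\alpha-1}(k,(j-1)r)$, and the recurrence (\ref{mainmatrixpart1}) collapses the bracket to $Q(i+2,j)$; reindexing $i\mapsto i-1$, this is precisely $\sum_{i\ge 1}\sum_{j}Q(i+1,j)H_{i\alpha-1}(k,(j-1)r)$. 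Comparing with the computed $\nabla_{-r}^\alpha$, the two agree except for the $i=0$ slice $\sum_{j}Q(1,j)H_{-1}(k,(j-1)r)$; since $Q(1,j)=\Theta$ for $j\ge 1$ by (\ref{mainmatrixpart2}) and $H_{-1}(k,-r)$ is supported only at $k=1-r<1$, this slice vanishes on $\mathbb{N}_1$, which closes the argument.

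The main obstacle I anticipate is exactly the base mismatch between $\nabla_{-r}^\alpha$ and the monomials anchored at $(j-1)r$: one must argue carefully that the vanishing of $H_\gamma(s,(j-1)r)$ for $s\le (j-1)r$ permits the global $-r$-based difference to be evaluated term by term as a base-matched difference, and one must track the lone boundary term ($i=0$, $j=0$) to confirm it lies outside $\mathbb{N}_1$. Justifying the interchange of $\nabla_{-r}^\alpha$ with the infinite $i$-sum (absolute convergence of the Mittag-Leffler-type series, the $j$-sum already being finite) is the remaining routine check.
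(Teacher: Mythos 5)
Your argument is correct and follows essentially the same route as the paper's proof: verify the initial condition directly from the series form, then apply $\nabla_{-r}^{\alpha}$ termwise, shift the base point to $(j-1)r$, invoke the second identity of Lemma \ref{lemma2}, and collapse the result via the recurrence (\ref{mainmatrixpart1}) into $M\mathbb{D}_{\alpha,\alpha,r}^{M,N}(k)+N\mathbb{D}_{\alpha,\alpha,r}^{M,N}(k-r)$. The only differences are organizational and in fact tighten the paper's exposition: you replace the induction on the block index $p$ (whose inductive hypothesis the paper never genuinely uses) by a single global series valid on all of $\mathbb{N}_{1-r}$, and you explicitly justify two points the paper glosses over, namely why the $(-r)$-based difference may be evaluated as the base-matched $\nabla_{(j-1)r}^{\alpha}$ (via the vanishing of $H_{\gamma}(s,(j-1)r)$ for $s\le (j-1)r$) and why the leftover $i=0$ slice $Q(1,0)H_{-1}(k,-r)$ contributes nothing on $\mathbb{N}_{1}$.
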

\begin{proof}
Firstly we show that it verifies the initial condition $z\left(  k\right)  =\mathbb{E}_{A,\alpha,\alpha-1}\left(k,-r \right)$, for $k\in\mathbb{N}_{1-r}^0$. From Definition \ref{ddp2}, one can easily see that
\begin{equation*}
 {\mathbb{D}_{\alpha,\alpha,r}^{M,N}}(k)= {\displaystyle\sum\limits_{i=0}^{\infty}} M^i H_{i\alpha+\alpha-1}\left(k,-r\right)=\mathbb{E}_{M,\alpha,\alpha-1}\left(k,-r \right).
\end{equation*}
Now, for $ k\in\mathbb{N}_1$ we show that
\begin{equation}\label{mus1}
  \nabla_{-r}^\alpha {\mathbb{D}_{\alpha,\alpha,r}^{M,N}}(k) =M{\mathbb{D}_{\alpha,\alpha,r}^{M,N}}(k)  +N{\mathbb{D}_{\alpha,\alpha,r}^{M,N}}(k-r).
\end{equation}
For $k\in \mathbb{N}_1$, there exists $p\in \mathbb{N}_1$ such that $k\in\mathbb{N}_{(p-1)r+1}^{pr}$. We apply the mathematical induction on $p\in \mathbb{N}_1$ to demonstrate its trueness. For $p=1$, we have
\begin{equation}\label{mainthm1}
{\mathbb{D}_{\alpha,\alpha,r}^{M,N}}(k)=  {\displaystyle\sum\limits_{i=0}^{\infty}} {\displaystyle\sum\limits_{j=0}^{1}} Q\left( i+1, j\right) H_{i\alpha+\alpha-1}\left(k,(j-1)r\right).
\end{equation}
Taking the Riemann Liouville fractional difference $\nabla_{-r}^\alpha$ of (\ref{mainthm1}), we acquire
\begin{align*}
\nabla_{-r}^\alpha  {\mathbb{D}_{\alpha,\alpha,r}^{M,N}}(k) & =   {\displaystyle\sum\limits_{i=0}^{\infty}} {\displaystyle\sum\limits_{j=0}^{1}} Q\left( i+1, j\right)\nabla_{-r}^\alpha H_{i\alpha+\alpha-1}\left(k,(j-1)r\right).
\end{align*}
Taking the subintervals and Lemma \ref{lemma2}  into consideration, we have
\begin{align*}
\nabla_{-r}^\alpha  {\mathbb{D}_{\alpha,\alpha,r}^{M,N}}(k) & =   {\displaystyle\sum\limits_{i=0}^{\infty}} {\displaystyle\sum\limits_{j=0}^{1}} Q\left( i+1, j\right)\nabla_{(j-1)r}^\alpha H_{i\alpha+\alpha-1}\left(k,(j-1)r\right)\\
&=   {\displaystyle\sum\limits_{i=0}^{\infty}} {\displaystyle\sum\limits_{j=0}^{1}} Q\left( i+1, j\right) H_{i\alpha-1}\left(k,(j-1)r\right).
\end{align*}
Using (\ref{mainmatrixpart1}) and (\ref{mainmatrixpart2}), we acquire
\begin{align*}
\nabla_{-r}^\alpha  {\mathbb{D}_{\alpha,\alpha,r}^{M,N}}(k) & =   M{\displaystyle\sum\limits_{i=1}^{\infty}} {\displaystyle\sum\limits_{j=0}^{1}} Q\left( i, j\right) H_{i\alpha-1}\left(k,(j-1)r\right)\\
&+   N{\displaystyle\sum\limits_{i=1}^{\infty}} {\displaystyle\sum\limits_{j=1}^{1}} Q\left( i, j-1\right) H_{i\alpha-1}\left(k,(j-1)r\right)\\
& =   M{\displaystyle\sum\limits_{i=0}^{\infty}} {\displaystyle\sum\limits_{j=0}^{1}} Q\left( i+1, j\right) H_{i\alpha+\alpha-1}\left(k,(j-1)r\right)\\
&+   N{\displaystyle\sum\limits_{i=0}^{\infty}} {\displaystyle\sum\limits_{j=0}^{0}} Q\left( i+1, j\right) H_{i\alpha+\alpha-1}\left(k,jr\right)\\
&=M{\mathbb{D}_{\alpha,\alpha,r}^{M,N}}(k)  +N{\mathbb{D}_{\alpha,\alpha,r}^{M,N}}(k-r).
\end{align*}
Now, let us assume its validity for $p=n$, that is
\begin{equation*}
{\mathbb{D}_{\alpha,\alpha,r}^{M,N}}(k)=  {\displaystyle\sum\limits_{i=0}^{\infty}} {\displaystyle\sum\limits_{j=0}^{n}} Q\left( i+1, j\right) H_{i\alpha+\alpha-1}\left(k,(j-1)r\right)
\end{equation*}
satisfies (\ref{mus1}). For $p=n+1$, consider via the similar calculations as in the first case
\begin{align*}
\nabla_{-r}^\alpha  {\mathbb{D}_{\alpha,\alpha,r}^{M,N}}(k) & =   {\displaystyle\sum\limits_{i=0}^{\infty}} {\displaystyle\sum\limits_{j=0}^{n+1}} Q\left( i+1, j\right)\nabla_{(j-1)r}^\alpha H_{i\alpha+\alpha-1}\left(k,(j-1)r\right)\\
&=   {\displaystyle\sum\limits_{i=0}^{\infty}} {\displaystyle\sum\limits_{j=0}^{n+1}} Q\left( i+1, j\right) H_{i\alpha-1}\left(k,(j-1)r\right)\\
& =   M{\displaystyle\sum\limits_{i=1}^{\infty}} {\displaystyle\sum\limits_{j=0}^{n+1}} Q\left( i, j\right) H_{i\alpha-1}\left(k,(j-1)r\right)\\
&+   N{\displaystyle\sum\limits_{i=1}^{\infty}} {\displaystyle\sum\limits_{j=1}^{n+1}} Q\left( i, j-1\right) H_{i\alpha-1}\left(k,(j-1)r\right)\\
&=M{\mathbb{D}_{\alpha,\alpha,r}^{M,N}}(k)  +N{\mathbb{D}_{\alpha,\alpha,r}^{M,N}}(k-r),
\end{align*}
which is the foreseen result.
\end{proof}

Secondly, we examine the below homogeneous Riemann Liouville fractional delayed difference system
\begin{equation}
\left\{
\begin{array}
[c]{l}%
\nabla_{-r}^\alpha z \left( k\right) =Mz\left(  k\right)  +Nz\left(
k-r\right),\ \ k\in\mathbb{N}_1  ,\\
\ \ \ \ z\left(  k\right)  =\phi\left(  k\right)  ,\ \ k\in\mathbb{N}_{1-r}^0, \ \ r>0,\\
\end{array}
\right.  \label{sorkokhomogeneous2}%
\end{equation}
$\phi:\mathbb{N}_{1-r}^0\rightarrow\mathbb{R}^n$ is an initial conditional function, the rest of details are same as (\ref{sorkokhomogeneous2}).

\begin{theorem}\label{maintheoremhomogeneous2}
  The following function
  \begin{equation*}\label{solution2}
    z(k)={\mathbb{D}_{\alpha,\alpha,r}^{M,N}}(k)\phi(1-r)
    +\int_{1-r}^{0}{\mathbb{D}_{\alpha,\alpha,r}^{M,N}}\left(k-r-\rho(s)\right)\left(\nabla_{-r}^\alpha \phi(s)-M\phi(s)\right)\nabla s,
  \end{equation*}
  satisfies  homogeneous system (\ref{sorkokhomogeneous2}).
\end{theorem}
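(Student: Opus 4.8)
The plan is to use linearity of the nabla Riemann--Liouville fractional difference and to verify the two defining requirements of system (\ref{sorkokhomogeneous2}) separately: that the candidate reproduces the initial data $\phi$ on $\mathbb{N}_{1-r}^0$, and that it solves the fractional difference equation on $\mathbb{N}_1$. I would split $z=z_1+z_2$ with $z_1(k)=\mathbb{D}_{\alpha,\alpha,r}^{M,N}(k)\phi(1-r)$ and $z_2(k)=\int_{1-r}^0 \mathbb{D}_{\alpha,\alpha,r}^{M,N}(k-r-\rho(s))\,g(s)\,\nabla s$, where $g(s)=\nabla_{-r}^\alpha\phi(s)-M\phi(s)$, and treat the two summands and the two requirements in turn.

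For the equation on $\mathbb{N}_1$, the first summand is essentially free: since $\phi(1-r)$ is a constant vector, $\nabla_{-r}^\alpha z_1(k)=\left(\nabla_{-r}^\alpha \mathbb{D}_{\alpha,\alpha,r}^{M,N}(k)\right)\phi(1-r)=Mz_1(k)+Nz_1(k-r)$ is immediate from Theorem \ref{maintheorem1}. The work sits in $z_2$. Because the integration limits $1-r$ and $0$ are independent of $k$, the integral is a finite sum $\sum_{s=2-r}^0$ and linearity commutes $\nabla_{-r}^\alpha$ past it, reducing matters to $\nabla_{-r}^\alpha \mathbb{D}_{\alpha,\alpha,r}^{M,N}(\cdot-c)$ at $c=r+\rho(s)$. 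Here I would first establish the elementary translation rule $\nabla_a^\alpha[w(\cdot-c)](k)=[\nabla_{a-c}^\alpha w](k-c)$, which holds because the kernel $H_{-\alpha-1}(k,\cdot)$ depends only on the difference of its arguments, and use it to convert the shifted argument into a shift of the base point to $-2r-\rho(s)$. Comparing this operator against the base point $-r$ demanded by Theorem \ref{maintheorem1}, the only extra contribution comes from $\mathbb{D}_{\alpha,\alpha,r}^{M,N}(-r)=I$, all earlier values being $\Theta$, so the operator equation applies up to an explicit monomial boundary term $H_{-\alpha-1}(k-r-\rho(s),-r-1)$.

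For the initial condition on $\mathbb{N}_{1-r}^0$, I would substitute $k\in\mathbb{N}_{1-r}^0$ and read off the active branches of the piecewise form (\ref{ddp2}). Then $z_1(k)=\mathbb{E}_{M,\alpha,\alpha-1}(k,-r)\phi(1-r)=\sum_i M^iH_{i\alpha+\alpha-1}(k,-r)\phi(1-r)$, while the integral is designed to telescope against it. The engine here is the composition Lemma \ref{composition2}, $\nabla_a^{-\beta}\nabla_{\rho(a)}^\beta z(k)=z(k)-H_{\beta-1}(k,\rho(a))z(a)$: taking $\rho(a)=-r$ and $\beta=\alpha$ it yields $\phi(k)$ minus the boundary term $H_{\alpha-1}(k,-r)\phi(1-r)$, which is precisely the $i=0$ monomial in $z_1$. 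The higher-order monomials of $\mathbb{E}_{M,\alpha,\alpha-1}$ together with the $-M\phi$ correction built into $g$ are what I expect to absorb the remaining terms, recovering $z(k)=\phi(k)$.

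I anticipate the main obstacle to be the reassembly in the $z_2$ step of the equation verification. The leftover boundary contribution $\sum_s H_{-\alpha-1}(k-r-\rho(s),-r-1)\,g(s)$ does not visibly vanish, and for those summation indices with $k-r-\rho(s)\le 0$ the shifted argument falls into the initial-data regime where Theorem \ref{maintheorem1} does not directly apply; these sub-threshold terms must be analysed separately and shown to cancel the boundary contribution so that the total collapses exactly to $Mz(k)+Nz(k-r)$. A secondary, purely bookkeeping difficulty is matching the piecewise branches of $\mathbb{D}_{\alpha,\alpha,r}^{M,N}$ throughout the initial-condition computation. Away from these two points the manipulations are routine applications of linearity, Lemma \ref{lemma2}, and the recursions (\ref{mainmatrixpart1})--(\ref{mainmatrixpart2}).
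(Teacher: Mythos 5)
Your overall strategy coincides with the paper's: split $z=z_1+z_2$, verify the initial condition via Theorem \ref{composition1} and Lemma \ref{composition2} with the telescoping against the $-M\phi$ correction, and verify the equation on $\mathbb{N}_1$ by pushing $\nabla_{-r}^\alpha$ through the finite sum and invoking Theorem \ref{maintheorem1}. The initial-condition half of your plan is complete and is essentially the paper's computation. The problem is the equation half, where you correctly locate the obstacle --- the mismatch between the base point $-2r-\rho(s)$ produced by your translation rule and the base point $-r$ required by Theorem \ref{maintheorem1}, which leaves the extra kernel term $H_{-\alpha-1}\left(k-r-\rho(s),-r-1\right){\mathbb{D}_{\alpha,\alpha,r}^{M,N}}(-r)$ --- but then only assert that it ``must be analysed separately and shown to cancel'' without giving any mechanism. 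That is the gap: this kernel value does not vanish for $k\in\mathbb{N}_1$ and $s\in\mathbb{N}_{2-r}^{0}$, so as long as you treat Theorem \ref{maintheorem1} as a black box and use the piecewise value ${\mathbb{D}_{\alpha,\alpha,r}^{M,N}}(-r)=I$, the sum $\sum_{s}H_{-\alpha-1}\left(k-r-\rho(s),-r-1\right)\left(\nabla_{-r}^\alpha\phi(s)-M\phi(s)\right)$ survives and your computation does not collapse to $Mz(k)+Nz(k-r)$; there is also no hidden partner term that cancels it.

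The way to close this is not a separate cancellation but to avoid creating the term at all: perform the shift at the level of the Taylor-monomial representation (\ref{ddp2}) rather than on the assembled function. Each monomial $H_{i\alpha+\beta-1}(\cdot,a)$ vanishes identically at and below its base point $a$ (the denominator $\Gamma(k-a)$ has a pole there), so $\nabla_{-r}^{\alpha}H_{i\alpha+\alpha-1}\left(\cdot,(j-1)r+c\right)(k)=\nabla_{(j-1)r+c}^{\alpha}H_{i\alpha+\alpha-1}\left(\cdot,(j-1)r+c\right)(k)=H_{i\alpha-1}\left(k,(j-1)r+c\right)$ by Lemma \ref{lemma2} with no boundary contribution whatsoever, and the recursion (\ref{mainmatrixpart1})--(\ref{mainmatrixpart2}) then reassembles $M{\mathbb{D}_{\alpha,\alpha,r}^{M,N}}(k-c)+N{\mathbb{D}_{\alpha,\alpha,r}^{M,N}}(k-r-c)$ exactly as in the proof of Theorem \ref{maintheorem1}. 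Equivalently, when expanding $\nabla_{-r}^\alpha z(k)$ for $k\in\mathbb{N}_1$ you should substitute for $z(\sigma)$, $\sigma\in\mathbb{N}_{1-r}^{0}$, the series form (which you have already shown equals $\phi(\sigma)$), not the piecewise form whose conventional value $I$ at $-r$ is precisely what generates the spurious boundary term. The paper finesses all of this in one line by asserting $\nabla_{-r}^\alpha{\mathbb{D}_{\alpha,\alpha,r}^{M,N}}\left(k-r-\rho(s)\right)=M{\mathbb{D}_{\alpha,\alpha,r}^{M,N}}\left(k-r-\rho(s)\right)+N{\mathbb{D}_{\alpha,\alpha,r}^{M,N}}\left(k-2r-\rho(s)\right)$ under the integral sign; your version is more honest about what needs proving, but as written it stops exactly where the work begins.
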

\begin{proof}
  In the light of Definition \ref{ddp2}, one easily acquire  that,  	with an eye to ${\mathbb{D}_{\alpha,\alpha,r}^{M,N}}\left(k-r-\rho(s)\right)=0$, for $k\in\mathbb{N}_{1-r}^0$ ,
  \begin{align*}
    z(k)  & =  {\mathbb{D}_{\alpha,\alpha,r}^{M,N}}(k)\phi(1-r)+\int_{1-r}^{k}{\mathbb{D}_{\alpha,\alpha,r}^{M,N}}\left(k-r-\rho(s)\right)\left(\nabla_{-r}^\alpha \phi(s)-M\phi(s)\right)\nabla s\\
     & ={\displaystyle\sum\limits_{i=0}^{\infty}} M^i  H_{i\alpha+\alpha-1}\left(k,-r\right)\phi(1-r) \\
     &+{\displaystyle\sum\limits_{i=0}^{\infty}} M^i\int_{1-r}^{k}H_{i\alpha+\alpha-1}\left(k,\rho(s)\right)\left(\nabla_{-r}^\alpha \phi(s)-M\phi(s)\right)\nabla s\\
     &={\displaystyle\sum\limits_{i=0}^{\infty}} M^i H_{i\alpha+\alpha-1}\left(k,-r\right)\phi(1-r) \\
     &+{\displaystyle\sum\limits_{i=0}^{\infty}} M^i \left(\nabla_{1-r}^{-(i\alpha+\alpha)}\nabla_{-r}^\alpha \phi(k)-M\nabla_{1-r}^{-(i\alpha+\alpha)}\phi(k)\right).
  \end{align*}
It is time to apply Theorem \ref{composition1} and Lemma \ref{composition2} to the above equation, so we get
 \begin{align*}
    z(k)
     &={\displaystyle\sum\limits_{i=0}^{\infty}} M^i H_{i\alpha+\alpha-1}\left(k,-r\right)\phi(1-r)\\
     &={\displaystyle\sum\limits_{i=0}^{\infty}} M^i H_{i\alpha+\alpha-1}\left(k,-r\right)\phi(1-r)-M{\displaystyle\sum\limits_{i=0}^{\infty}}M^i \nabla_{1-r}^{-(i\alpha+\alpha)}\phi(k) \\
     &+{\displaystyle\sum\limits_{i=0}^{\infty}}M^i  \left(\nabla_{1-r}^{-i\alpha} \left( \phi(k)-H_{i\alpha+\alpha-1}\left(k,-r\right)\phi(1-r) \right)\right)\\
     & = {\displaystyle\sum\limits_{i=1}^{\infty}} M^i \nabla_{1-r}^{-i\alpha}  \phi(k) + \phi(k) -M{\displaystyle\sum\limits_{i=0}^{\infty}}M^i \nabla_{1-r}^{-(i\alpha+\alpha)}\phi(k)
     =\phi(k),
  \end{align*}
 which means that $z(k)$ satisfies the initial condition  for $k\in\mathbb{N}_{1-r}^0$.  In order to demonstrate that $z(k)$ fulfills homogeneous system $(\ref{composition2})$ for $k\in\mathbb{N}_1$, we exploit the Riemann Liouville fractional difference, the well-known Fubini Theorem, and Theorem \ref{maintheorem1} to acquire:
 \begin{align*}
   \nabla_{-r}^\alpha z \left( k\right)  &= \nabla_{-r}^\alpha {\mathbb{D}_{\alpha,\alpha,r}^{M,N}}(k)\phi(1-r)\\
    &+\int_{1-r}^{0}\nabla_{-r}^\alpha{\mathbb{D}_{\alpha,\alpha,r}^{M,N}}\left(k-r-\rho(s)\right)\left(\nabla_{-r}^\alpha \phi(s)-M\phi(s)\right)\nabla s   \\
    &= M{\mathbb{D}_{\alpha,\alpha,r}^{M,N}}(k)\phi(1-r)+N{\mathbb{D}_{\alpha,\alpha,r}^{M,N}}(k-r)\phi(1-r)\\
    &+M\int_{1-r}^{0}{\mathbb{D}_{\alpha,\alpha,r}^{M,N}}\left(k-r-\rho(s)\right)\left(\nabla_{-r}^\alpha \phi(s)-M\phi(s)\right)\nabla s   \\
    &+N\int_{1-r}^{0}{\mathbb{D}_{\alpha,\alpha,r}^{M,N}}\left(k-2r-\rho(s)\right)\left(\nabla_{-r}^\alpha \phi(s)-M\phi(s)\right)\nabla s \\
    &=Mz\left(  k\right)  +Nz\left(k-r\right).
 \end{align*}
 which completes the proof.
 \end{proof}

\begin{remark}
Since the solution of the homogeneous part is known from Theorem \ref{maintheorem1}, the variation of constants' technique also can be used to prove this theorem.
\end{remark}

\subsection{Nonhomogeneous case:}
In this subsection, we  consider the following nonhomogeneous Riemann Liouville fractional delayed difference system
\begin{equation}
\left\{
\begin{array}
[c]{l}%
\nabla_{-r}^\alpha z \left( k\right) =Mz\left(  k\right)  +Nz\left(
k-r\right)  +\daleth\left(  k  \right)   ,\ \ k\in\mathbb{N}_1  ,\\
\ \ \ \ z\left(  k\right)  =0  ,\ \ k\in\mathbb{N}_{1-r}^0, \ \ r>0,\\
\end{array}
\right.  \label{sorkoknonhomogeneous}%
\end{equation}
where $\nabla_{-r}^\alpha$ is the Riemann-Liouville fractional difference of order $0<\alpha<1$, $z:\mathbb{N}_1\rightarrow\mathbb{R}^n$, $\daleth:\mathbb{N}_1\rightarrow\mathbb{R}^n$ is a continuous function, $r \in \mathbb{N}_2$ is a retardation, $M, N \in \mathbb{R}^{n\times n}$ are constant coefficient matrices.

Before giving one of main theorems, we prove some useful equalities in the following lemma.

\begin{lemma}\label{lemmanonhomogeneous}
  The below equalities hold good.
  \begin{itemize}
    \item $\displaystyle\int_{-r}^{0} H_{-\alpha}\left(k,\rho(s)\right) \int_{0}^{s} {\mathbb{D}_{\alpha,\alpha,r}^{M,N}}\left(s-r-\rho(r)\right)\daleth(r)\nabla \rho(r) \nabla s=0$,
    \item $\nabla_{\rho(k)}^{\alpha-1}\daleth(k)=\daleth(k)$.
  \end{itemize}
\end{lemma}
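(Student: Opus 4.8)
The plan is to treat the two equalities independently, since each collapses to an elementary observation once the defining nabla sum is written out explicitly. For the first equality I would begin by unfolding the inner definite nabla integral according to the convention $\int_{a}^{b} f(\tau)\nabla \tau=\sum_{\tau=a+1}^{b} f(\tau)$ recorded in the preliminaries, where $\tau$ denotes the inner integration variable. The outer variable $s$ ranges over $\mathbb{N}_{1-r}^{0}$, so $s\le 0$ throughout; consequently the inner integral $\int_{0}^{s}(\cdots)\nabla \tau=\sum_{\tau=1}^{s}(\cdots)$ is an empty sum and hence vanishes for every admissible value of $s$. Since each summand of the outer sum carries this vanishing inner factor, the entire double sum collapses to zero regardless of the value of the monomial $H_{-\alpha}(k,\rho(s))$. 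The only point requiring care is to fix the empty-sum convention so that $\int_{0}^{s}$ is read as a forward sum that is empty whenever $s\le 0$; this is exactly the convention under which the nonhomogeneous solution respects the zero initial data.

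For the second equality I would first rewrite the operator using $\alpha-1=-(1-\alpha)$ with $1-\alpha>0$, so that $\nabla_{\rho(k)}^{\alpha-1}=\nabla_{\rho(k)}^{-(1-\alpha)}$ is a genuine nabla fractional sum of positive order based at $\rho(k)=k-1$. Applying the definition of the fractional sum with base point $a=\rho(k)$ yields a sum running from $s=\rho(k)+1=k$ to $k$, i.e.\ a single term, namely $H_{-\alpha}\left(k,\rho(k)\right)\daleth(k)$. It then remains to evaluate $H_{-\alpha}(k,k-1)=\dfrac{1^{\overline{-\alpha}}}{\Gamma(1-\alpha)}$; using the rising-function identity $1^{\overline{-\alpha}}=\Gamma(1-\alpha)/\Gamma(1)=\Gamma(1-\alpha)$, this monomial equals $1$, whence $\nabla_{\rho(k)}^{\alpha-1}\daleth(k)=\daleth(k)$ follows immediately.

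Neither part presents a serious difficulty; the essential work is careful bookkeeping of the summation limits. In the first equality the subtlety is the empty-sum interpretation of the inner integral over a non-positive upper limit, while in the second it is recognizing that the base point $\rho(k)=k-1$ forces the fractional sum to degenerate to a single term whose coefficient $H_{-\alpha}(k,k-1)$ must be computed exactly through the rising function. I expect the verification $H_{-\alpha}(k,k-1)=1$ to be the only genuinely computational step, and the argument otherwise to be a direct application of the definitions already established above.
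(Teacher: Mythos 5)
Your treatment of the second identity is correct and essentially the paper's own argument: the base point $\rho(k)$ collapses the fractional sum to the single term $s=k$, and the explicit evaluation $H_{-\alpha}(k,\rho(k))=1^{\overline{-\alpha}}/\Gamma(1-\alpha)=1$ via the rising function is exactly the computation required (your bookkeeping of the kernel's index is, if anything, more careful than the paper's).

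The first identity is where there is a genuine gap. You dispose of the inner integral $\int_0^s(\cdots)$ by declaring it an empty sum whenever $s\le 0$. But the convention in force in this paper, and in the time-scales reference \cite{a1} it invokes, is the oriented one, $\int_a^b f\,\nabla\tau=-\int_b^a f\,\nabla\tau$; the paper uses it explicitly in the proof of Theorem~\ref{maintheoremnonhomogeneous}, where for $k\in\mathbb{N}_{1-r}^{-1}$ it writes $\int_0^k\cdots=-\int_k^0\cdots=-\sum_{s=k+1}^{0}\cdots$ and must then argue that each of the (genuinely present) summands vanishes. Under that convention your inner integral is a nonempty sum of roughly $-s$ terms, and nothing in your argument shows those terms are zero. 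The mechanism your proof never touches is the support of the discrete DPML function: for $-r\le s<0$ the inner summation variable satisfies $\rho(\cdot)\ge s+1$, so the argument $s-r-\rho(\cdot)$ lies in $\mathbb{N}^{-r-1}$, where ${\mathbb{D}_{\alpha,\alpha,r}^{M,N}}=\Theta$ by Definition~\ref{d-ddp}; hence the integrand itself vanishes, and the case $s=0$ is the trivial $\int_0^0$. Without this step the identity --- and the interchange of summation it licenses in the nonhomogeneous theorem --- is not established; and if you insist on the empty-sum convention instead, you would also have to redo the initial-condition verification in that theorem, which the paper carries out with the oriented convention.
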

\begin{proof}
  For the first item, take $s=0$, the whole term should be zero from the inner integral whose borders are equal to zero. If $0 \leq \rho(r)\leq s$ and $ -r\leq s <0$, then $s-r-\rho(r) \leq -r-1$, and so ${\mathbb{D}_{\alpha,\alpha,r}^{M,N}}\left(s-r-\rho(r)\right)=\Theta$ because of Definition \ref{ddp2}. The whole term again should be zero. For the second item, we need  the simple calculation as follows:
  \begin{align*}
    \nabla_{\rho(k)}^{\alpha-1}\daleth(k) & =\sum_{s=\rho(k)+1}^{k} H_{\alpha-1}(k,\rho(s))\daleth(s)
      =\sum_{s=k}^{k} H_{\alpha-1}(k,\rho(s))\daleth(s)  \\
     & = H_{\alpha-1}(k,\rho(k))\daleth(k)
      = \daleth(k),
  \end{align*}
where $H_{\alpha-1}(k,\rho(k))=1$.
\end{proof}

\begin{theorem}\label{maintheoremnonhomogeneous}
The following integral term
  \begin{equation*}
    z(k)=\int_{0}^{k} {\mathbb{D}_{\alpha,\alpha,r}^{M,N}}\left(k-r-\rho(s)\right)\daleth(s)\nabla s,
  \end{equation*}
fulfills nonhomogeneous system $(\ref{sorkoknonhomogeneous})$.
\end{theorem}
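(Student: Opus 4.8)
The plan is to check the two requirements of system $(\ref{sorkoknonhomogeneous})$ separately: that $z$ carries the zero initial data on $\mathbb{N}_{1-r}^0$, and that it solves the fractional difference equation on $\mathbb{N}_1$. The initial condition is immediate. For $k\in\mathbb{N}_{1-r}^0$ one has $k\le 0$, so the nabla integral $\int_0^k(\cdot)\nabla s=\sum_{s=1}^{k}(\cdot)$ is an empty sum and $z(k)=0$, exactly the prescribed data. All the work therefore lies in the case $k\in\mathbb{N}_1$.

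For $k\in\mathbb{N}_1$ I would first expand the Riemann--Liouville difference through its defining sum, $\nabla_{-r}^\alpha z(k)=\sum_{s=-r+1}^{k}H_{-\alpha-1}(k,\rho(s))z(s)$, and drop the indices $s\le 0$ since $z$ vanishes there. Substituting the definition of $z(s)$ and interchanging the two summations by the Fubini theorem recasts the inner sum as a Riemann--Liouville difference of the shifted kernel based at $\rho(s)$, giving $\nabla_{-r}^\alpha z(k)=\sum_{s=1}^{k}\big[\nabla_{\rho(s)}^\alpha{\mathbb{D}_{\alpha,\alpha,r}^{M,N}}(k-r-\rho(s))\big]\daleth(s)$. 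One may present the same reduction through $\nabla_{-r}^\alpha=\nabla\circ\nabla_{-r}^{-(1-\alpha)}$: compute $\nabla_{-r}^{-(1-\alpha)}z(k)$ by Fubini (here item~1 of Lemma \ref{lemmanonhomogeneous} is what discards the piece coming from pushing the base down to $-r$, since the integral representation of $z$ vanishes on $(-r,0]$), and then apply the Leibniz rule of Theorem \ref{lemmaleibniz}. The Leibniz boundary term is an empty inner sum, so the entire content sits in the integrand and one returns to the same expression.

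The key identity is the shift-covariance of the Riemann--Liouville difference: moving the base from $\rho(s)$ back to $-r$ yields $\nabla_{\rho(s)}^\alpha{\mathbb{D}_{\alpha,\alpha,r}^{M,N}}(k-r-\rho(s))=\big[\nabla_{-r}^\alpha{\mathbb{D}_{\alpha,\alpha,r}^{M,N}}\big](k-r-\rho(s))$. For the indices with $k-r-\rho(s)\ge 1$ (the bulk), Theorem \ref{maintheorem1} applies and produces $M\,{\mathbb{D}_{\alpha,\alpha,r}^{M,N}}(k-r-\rho(s))+N\,{\mathbb{D}_{\alpha,\alpha,r}^{M,N}}(k-2r-\rho(s))$; summing the $N$-part against $\daleth(s)$ over $s=1,\dots,k-r$ is, by definition, exactly $Nz(k-r)$. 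The indices with $k-r-\rho(s)\le 0$ fall in the Mittag--Leffler region $\mathbb{N}_{1-r}^0$, outside the scope of Theorem \ref{maintheorem1}; inserting the explicit series form of ${\mathbb{D}_{\alpha,\alpha,r}^{M,N}}$ there (Definition \ref{ddp2}) shows that each such term still contributes its $M$-multiple, so that the $M$-contributions from bulk and boundary assemble into $M\sum_{s=1}^{k}{\mathbb{D}_{\alpha,\alpha,r}^{M,N}}(k-r-\rho(s))\daleth(s)=Mz(k)$. What remains is the single diagonal term $s=k$, where $H_{-\alpha}(k,\rho(k))=1$; by item~2 of Lemma \ref{lemmanonhomogeneous}, namely $\nabla_{\rho(k)}^{\alpha-1}\daleth(k)=\daleth(k)$, this surviving piece collapses to precisely the forcing $\daleth(k)$. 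Collecting the three contributions gives $\nabla_{-r}^\alpha z(k)=Mz(k)+Nz(k-r)+\daleth(k)$.

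The step I expect to be the main obstacle is the boundary bookkeeping in the third paragraph: tracking exactly which summation indices leave the region where the homogeneous identity of Theorem \ref{maintheorem1} holds, reconciling the summation limits so that the bulk closes up cleanly into $Mz(k)+Nz(k-r)$, and isolating the lone diagonal contribution that becomes $\daleth(k)$. Lemma \ref{lemmanonhomogeneous} is tailored to this: item~1 removes the spurious boundary integral and item~2 extracts the inhomogeneity. The two technical identities on which everything rests are the Fubini interchange of the double summation and the rebasing of the Riemann--Liouville difference from $\rho(s)$ to $-r$.
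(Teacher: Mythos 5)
Your argument is correct and reaches the same pivotal identity as the paper, namely $\nabla_{-r}^{\alpha}z(k)=\int_{0}^{k}\nabla_{\rho(t)}^{\alpha}{\mathbb{D}_{\alpha,\alpha,r}^{M,N}}\left(k-r-\rho(t)\right)\daleth(t)\nabla t+\daleth(k)$ followed by an application of the homogeneous equation, but the execution differs in three genuine respects. First, the paper factors $\nabla_{-r}^{\alpha}=\nabla\circ\nabla_{-r}^{-(1-\alpha)}$, uses item~1 of Lemma \ref{lemmanonhomogeneous} to kill the portion of the outer sum below $0$, applies Fubini, and then extracts the forcing term $\daleth(k)$ as the \emph{Leibniz boundary term} $g(\rho(k),k)$ of Theorem \ref{lemmaleibniz} together with item~2 of Lemma \ref{lemmanonhomogeneous}; you instead work directly with the kernel form $\sum_{s}H_{-\alpha-1}(k,\rho(s))z(s)$ and recover $\daleth(k)$ from the diagonal index $s=k$ (indeed $\nabla_{\rho(k)}^{\alpha}{\mathbb{D}_{\alpha,\alpha,r}^{M,N}}(1-r)={\mathbb{D}_{\alpha,\alpha,r}^{M,N}}(1-r)=I+M{\mathbb{D}_{\alpha,\alpha,r}^{M,N}}(1-r)$, which splits exactly into the forcing plus its $M$-contribution). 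Second, you dispense with the paper's induction on the block index $p$ with $k\in\mathbb{N}_{(p-1)r+1}^{pr}$ and treat all $k\in\mathbb{N}_{1}$ uniformly by separating the bulk indices $s\leq k-r$ (where Theorem \ref{maintheorem1} applies and yields the $N$-part closing into $Nz(k-r)$) from the boundary indices $s>k-r$ (where you fall back on the series of Definition \ref{ddp2} and Lemma \ref{lemma2} to see that only an $M$-multiple survives). This boundary bookkeeping is actually \emph{more} careful than the paper's, which applies the homogeneous identity across the whole range $0<t\leq k$ without comment; your version buys a cleaner and arguably more rigorous accounting at the cost of an explicit case split.

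Two small slips, neither fatal. For $k\in\mathbb{N}_{1-r}^{-1}$ the integral $\int_{0}^{k}(\cdot)\nabla s$ is not an empty sum under the convention $\int_{0}^{k}=-\int_{k}^{0}=-\sum_{s=k+1}^{0}$; the paper instead observes that each summand has argument $k-r-\rho(s)\in\mathbb{N}^{-r-1}$, so the kernel is $\Theta$ and $z(k)=0$ all the same. And in your parenthetical alternative route, the Leibniz boundary term of Theorem \ref{lemmaleibniz} is \emph{not} empty --- it is precisely the term that produces $\daleth(k)$ in the paper's proof --- though this does not affect your primary derivation, which correctly locates $\daleth(k)$ in the diagonal contribution.
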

\begin{proof}
We will firstly  show, for $k \in \mathbb{N}_{1-r}^0$, that
\begin{equation*}
    z(k)=\int_{0}^{k} {\mathbb{D}_{\alpha,\alpha,r}^{M,N}}\left(k-r-\rho(s)\right)\daleth(s)\nabla s=0.
  \end{equation*}
Due to the property of integral, $ z(k)=\int_{0}^{0} {\mathbb{D}_{\alpha,\alpha,r}^{M,N}}\left(k-r-\rho(s)\right)\daleth(s)\nabla s=0$, for $k=0$.  From \cite[Theorem 8.48]{a1}, one verifies that
\begin{align*}
  z(k) &  =\int_{0}^{k} {\mathbb{D}_{\alpha,\alpha,r}^{M,N}}\left(k-r-\rho(s)\right)\daleth(s)\nabla s \\
   & =-\int_{k}^{0} {\mathbb{D}_{\alpha,\alpha,r}^{M,N}}\left(k-r-\rho(s)\right)\daleth(s)\nabla s \\
    & =-\sum_{s=k+1}^{0} {\mathbb{D}_{\alpha,\alpha,r}^{M,N}}\left(k-r-\rho(s)\right)\daleth(s).
\end{align*}
If $k< s \leq 0$ and $1-r \leq k \leq -1$ together with a property of integer arithmetic, then $k-r-\rho(s) \in \mathbb{N}^{-1-r}$, and so ${\mathbb{D}_{\alpha,\alpha,r}^{M,N}}\left(k-r-\rho(s)\right)=\Theta$. Thus, we have $z(k)=0$ for $\mathbb{N}_{1-r}^{-1}$. To sum up, $z(k)=0$ for $\mathbb{N}_{1-r}^{0}$, that is,  $z(k)$ fulfills the initial condition.

In the rest of this proof, we use the technique of mathematical induction on $p \in \mathbb{N}_1$.  For $k\in \mathbb{N}_1$, there exists $p\in \mathbb{N}_1$ such that $k\in\mathbb{N}_{(p-1)r+1}^{pr}$. Suppose that it is true for $p=1$ such that  $k\in\mathbb{N}_{1}^{r}$.

It is clear, from the initial condition, that $z\left(
k-r\right)=0$ for $k\in\mathbb{N}_{1}^{r}$, and so
\begin{equation*}
  Mz\left(  k\right)  +Nz\left(
k-r\right)  +\daleth\left(  k  \right)=Mz\left(  r\right)   +\daleth\left(  r  \right).
\end{equation*}

On the other hand, we have for $k\in\mathbb{N}_{1}^{r}$
\begin{align*}
  \nabla_{-r}^{\alpha}z(k)& =\nabla_{-r}^{\alpha}\left[\int_{0}^{k} {\mathbb{D}_{\alpha,\alpha,r}^{M,N}}\left(k-r-\rho(s)\right)\daleth(s)\nabla s \right]\\
   & =\nabla \nabla_{-r}^{\alpha-1}\left[\int_{0}^{k} {\mathbb{D}_{\alpha,\alpha,r}^{M,N}}\left(k-r-\rho(s)\right)\daleth(s)\nabla s \right]\\
    & =\nabla \int_{-r}^{k}H_{-\alpha}(k,\rho(s))\left(\int_{0}^{s} {\mathbb{D}_{\alpha,\alpha,r}^{M,N}}\left(s-r-\rho(t)\right)\daleth(t)\nabla t \right) \nabla s\\
    & =\nabla \int_{-r}^{k}H_{-\alpha}(k,\rho(s))\left(\int_{0}^{s} {\mathbb{D}_{\alpha,\alpha,r}^{M,N}}\left(s-r-\rho(t)\right)\daleth(t)\nabla \rho(t) \right) \nabla s.
\end{align*}
It is time to use Lemma \ref{lemmanonhomogeneous} and Fubini Theorem, so we get
\begin{align*}
  \nabla_{-r}^{\alpha}z(k)   & =\nabla \int_{0}^{k} \int_{\rho(t)}^{k} H_{-\alpha}(k,\rho(s)) {\mathbb{D}_{\alpha,\alpha,r}^{M,N}}\left(s-r-\rho(t)\right)\daleth(t)\nabla s  \nabla t\\
  &= \nabla \int_{0}^{k}  \nabla_{\rho(t)}^{\alpha-1} {\mathbb{D}_{\alpha,\alpha,r}^{M,N}}\left(s-r-\rho(t)\right)\daleth(t)  \nabla t.
\end{align*}
By implementing Lemma \ref{lemmaleibniz}, Definition \ref{ddp2}, and Lemma \ref{lemmanonhomogeneous}, respectively to the above equality
\begin{align*}
  \nabla_{-r}^{\alpha}z(k)    &=  \int_{0}^{k}  \nabla_{\rho(t)}^{\alpha} {\mathbb{D}_{\alpha,\alpha,r}^{M,N}}\left(s-r-\rho(t)\right)\daleth(t)  \nabla t + \daleth(k)\\
  &= M \int_{0}^{k}  {\mathbb{D}_{\alpha,\alpha,r}^{M,N}}\left(s-r-\rho(t)\right)\daleth(t)  \nabla t + \daleth(k)\\
  &+ N \int_{0}^{k}  {\mathbb{D}_{\alpha,\alpha,r}^{M,N}}\left(s-2r-\rho(t)\right)\daleth(t)  \nabla t \\
  &= Mz(k)+\daleth(k).
\end{align*}
On comparing the obtained results on $k\in\mathbb{N}_{1}^{r}$, we have
\begin{equation*}
   \nabla_{-r}^{\alpha}z(k)=Mz\left(  r\right)   +\daleth\left(  r  \right)=Mz\left(  k\right)  +Nz\left(
k-r\right)  +\daleth\left(  k  \right).
\end{equation*}
Suppose that it is valid for $p=n$ such that $k\in\mathbb{N}_{(n-1)r+1}^{nr}$, that is
\begin{equation*}
 {\mathbb{D}_{\alpha,\alpha,r}^{M,N}}\left(k\right) = {\displaystyle\sum\limits_{i=0}^{\infty}}
{\displaystyle\sum\limits_{j=0}^{n}}
Q\left( i+1, j\right) H_{i\alpha+\alpha-1}\left(k,(j-1)r\right).
\end{equation*}
In the same manner as the first case, we acquire for $k\in\mathbb{N}_{nr+1}^{(n+1)r}$
\begin{align*}
  \nabla_{-r}^{\alpha}z(k)    &=  \int_{0}^{k}  \nabla_{\rho(t)}^{\alpha} {\mathbb{D}_{\alpha,\alpha,r}^{M,N}}\left(s-r-\rho(t)\right)\daleth(t)  \nabla t + \daleth(k)\\
  &= M \int_{0}^{k}  {\mathbb{D}_{\alpha,\alpha,r}^{M,N}}\left(s-r-\rho(t)\right)\daleth(t)  \nabla t + \daleth(k)\\
  &+ N \int_{0}^{k}  {\mathbb{D}_{\alpha,\alpha,r}^{M,N}}\left(s-2r-\rho(t)\right)\daleth(t)  \nabla t \\
 &= M \int_{0}^{k}  {\mathbb{D}_{\alpha,\alpha,r}^{M,N}}\left(s-r-\rho(t)\right)\daleth(t)  \nabla t + \daleth(k)\\
  &+ N \int_{0}^{k-r}  {\mathbb{D}_{\alpha,\alpha,r}^{M,N}}\left(s-2r-\rho(t)\right)\daleth(t)  \nabla t \\
  &=Mz\left(  k\right)  +Nz\left(
k-r\right)  +\daleth\left(  k  \right),
\end{align*}
which is what we want to prove.
\end{proof}
\begin{remark}
Based on the homogeneous solution from Theorem \ref{maintheorem1}, the variation of constants' technique also can be used to prove this theorem.
\end{remark}

Now, we will examine the below nonhomogeneous retarded Riemann Liouville fractional difference system
\begin{equation}
\left\{
\begin{array}
[c]{l}%
\nabla_{-r}^\alpha z \left( k\right) =Mz\left(  k\right)  +Nz\left(
k-r\right)  +\daleth\left(  k  \right)   ,\ \ k\in\mathbb{N}_1  ,\\
\ \ \ \ z\left(  k\right)  =\phi\left(  k\right)  ,\ \ k\in\mathbb{N}_{1-r}^0, \ \ r>0,\\
\end{array}
\right.  \label{sorkoknonhomogeneous2}%
\end{equation}
where $\nabla_{-r}^\alpha$ is the Riemann-Liouville fractional difference of order $0<\alpha<1$, $z:\mathbb{N}_1\rightarrow\mathbb{R}^n$, $\daleth:\mathbb{N}_1\rightarrow\mathbb{R}^n$ is a continuous function, $r \in \mathbb{N}_2$ is a retardation, $M, N \in \mathbb{R}^{n\times n}$ are constant coefficient matrices, $\phi:\mathbb{N}_{1-r}^0\rightarrow\mathbb{R}^n$ is an initial conditional function.

\begin{theorem}\label{mainlasttheorem}
  The whole function
  \begin{align}\label{totalsolution}
    z(k)&={\mathbb{D}_{\alpha,\alpha,r}^{M,N}}(k)\phi(1-r)
    +\int_{1-r}^{0}{\mathbb{D}_{\alpha,\alpha,r}^{M,N}}\left(k-r-\rho(s)\right)\left(\nabla_{-r}^\alpha \phi(s)-M\phi(s)\right)\nabla s \nonumber\\
     & +\int_{0}^{k} {\mathbb{D}_{\alpha,\alpha,r}^{M,N}}\left(k-r-\rho(s)\right)\daleth(s)\nabla s,
  \end{align}
  satisfies nonhomogeneous system $(\ref{sorkoknonhomogeneous2})$
\end{theorem}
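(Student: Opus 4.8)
The plan is to exploit the linearity of system (\ref{sorkoknonhomogeneous2}) and simply superpose the two solution representations already established. First I would split the candidate solution as $z(k)=z_1(k)+z_2(k)$, where
\begin{equation*}
z_1(k)={\mathbb{D}_{\alpha,\alpha,r}^{M,N}}(k)\phi(1-r)+\int_{1-r}^{0}{\mathbb{D}_{\alpha,\alpha,r}^{M,N}}\left(k-r-\rho(s)\right)\left(\nabla_{-r}^\alpha \phi(s)-M\phi(s)\right)\nabla s
\end{equation*}
is exactly the function treated in Theorem \ref{maintheoremhomogeneous2}, and
\begin{equation*}
z_2(k)=\int_{0}^{k} {\mathbb{D}_{\alpha,\alpha,r}^{M,N}}\left(k-r-\rho(s)\right)\daleth(s)\nabla s
\end{equation*}
is exactly the function treated in Theorem \ref{maintheoremnonhomogeneous}. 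By those two results, $z_1$ solves the homogeneous system (\ref{sorkokhomogeneous2}) with initial data $\phi$, while $z_2$ solves the nonhomogeneous system (\ref{sorkoknonhomogeneous}) with zero initial data.

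Second, I would check the initial condition on $k\in\mathbb{N}_{1-r}^0$. There Theorem \ref{maintheoremhomogeneous2} gives $z_1(k)=\phi(k)$ and Theorem \ref{maintheoremnonhomogeneous} gives $z_2(k)=0$, so $z(k)=z_1(k)+z_2(k)=\phi(k)$, which is precisely the prescribed initial condition of (\ref{sorkoknonhomogeneous2}).

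Third, for $k\in\mathbb{N}_1$ I would apply $\nabla_{-r}^\alpha$ to $z=z_1+z_2$ and use that the Riemann--Liouville fractional difference is a linear operator, being by definition the finite sum $\sum_{s} H_{-\alpha-1}(k,\rho(s))(\cdot)$ against a fixed kernel. Since both summands carry the same base point $-r$, the operator distributes term by term and
\begin{align*}
\nabla_{-r}^\alpha z(k)&=\nabla_{-r}^\alpha z_1(k)+\nabla_{-r}^\alpha z_2(k)\\
&=\bigl(Mz_1(k)+Nz_1(k-r)\bigr)+\bigl(Mz_2(k)+Nz_2(k-r)+\daleth(k)\bigr)\\
&=M z(k)+N z(k-r)+\daleth(k),
\end{align*}
where the middle line is exactly the content of Theorems \ref{maintheoremhomogeneous2} and \ref{maintheoremnonhomogeneous} applied to $z_1$ and $z_2$, respectively. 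Both the governing equation on $\mathbb{N}_1$ and the initial condition on $\mathbb{N}_{1-r}^0$ thus hold, so $z$ solves (\ref{sorkoknonhomogeneous2}).

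This is essentially a superposition argument, so I do not expect a genuine obstacle. The only points needing care are confirming the linearity of $\nabla_{-r}^\alpha$ so that the decomposition survives passage through the operator, and noting that both representations share the same lower base point $-r$ and the same integration windows ($\int_{1-r}^0$ and $\int_0^k$), so that no shift in the summation limits arises when the two pieces are added. Everything else is an immediate appeal to the two previously proved theorems.
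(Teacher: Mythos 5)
Your proposal is correct and follows essentially the same superposition argument as the paper's own proof: decompose $z=z_1+z_2$, invoke Theorems \ref{maintheoremhomogeneous2} and \ref{maintheoremnonhomogeneous}, and use the linearity of $\nabla_{-r}^\alpha$ to combine the two equations and the initial conditions. If anything, your write-up is slightly cleaner, since the paper's displayed computation contains minor typographical slips (e.g.\ writing $N\left[z_2(k-r)+z_2(k-r)\right]$ for $N\left[z_1(k-r)+z_2(k-r)\right]$ and applying $\nabla_{-r}^\alpha$ in the initial-condition check) that your version avoids.
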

\begin{proof}
It is clear that $z(k)=z_1(k)+z_2(k)$ in equation (\ref{totalsolution}), where $z_1(k)$ and  $z_2(k)$ are solutions given Theorem \ref{maintheoremhomogeneous2} and \ref{maintheoremnonhomogeneous} of systems (\ref{sorkokhomogeneous2}) and (\ref{maintheoremnonhomogeneous}), respectively, which form system (\ref{sorkoknonhomogeneous2}). By implementing the principle of superposition technique, $z(k)$ fulfills  system (\ref{sorkoknonhomogeneous2}). Indeed, for $k\in \mathbb{N}_1$ we acquire
\begin{align*}
\nabla_{-r}^\alpha z \left( k\right)   & = \nabla_{-r}^\alpha \left[ z_1(k)+z_2(k)\right]  \\
&=Mz_1\left(  k\right)  +Nz_1\left(k-r\right)  +Mz_2\left(  k\right)  +Nz_2\left(k-r\right)  +\daleth\left(  k  \right)\\
   & =M\left[z_1\left(  k\right)+z_2\left(  k\right) \right]+N\left[ z_2\left(  k-r\right)+z_2\left(  k-r\right)\right]+\daleth\left(  k  \right) \\
   &=Mz\left(  k\right)  +Nz\left(
k-r\right)  +\daleth\left(  k  \right),
\end{align*}
and for $k\in \mathbb{N}_{1-r}^0$,
\begin{align*}
\nabla_{-r}^\alpha z \left( k\right)   & = \nabla_{-r}^\alpha \left[ z_1(k)+z_2(k)\right]  \\
&=\phi(k)+0=\phi(k),
\end{align*}
which is the craved result.
\end{proof}

\begin{remark}
  Theorem \ref{mainlasttheorem}  corresponds to \cite[Theorem 4]{i3}, for $M=\Theta$  and $\alpha=\beta$,
\end{remark}

\section{Special Cases}
In this section we exhibit a couple of special cases.

\begin{example}
Let us reconsider the matrix nabla fractional delayed difference system (\ref{sorkok2}) when $MN=NM$. In this context, Theorem \ref{mainlasttheorem} may be reformulated as follows.
\end{example}

\begin{proposition}\label{proposition1}
The exact analytical solution of nonhomogeneous version of  system (\ref{sorkok2}) has the following explicit form:
{\footnotesize\begin{align*}
z(k)
    &=        {\displaystyle\sum\limits_{i=0}^{\infty}}
{\displaystyle\sum\limits_{j=0}^{p}}
\binom{i+1}{j}M^{i+1-j}N^j H_{(i+1)\alpha-1}\left(k,(j-1)r\right)\phi(1-r)\\
&    +{\displaystyle\sum\limits_{i=0}^{\infty}}
{\displaystyle\sum\limits_{j=0}^{p}}\binom{i+1}{j}M^{i+1-j}N^j \int_{1-r}^{0} H_{(i+1)\alpha-1}\left(k-r-\rho(s),(j-1)r\right)\nabla_{-r}^\alpha \phi(s)\nabla s \\
&    -{\displaystyle\sum\limits_{i=0}^{\infty}}
{\displaystyle\sum\limits_{j=0}^{p}}\binom{i+1}{j}M^{i+2-j}N^j \int_{1-r}^{0} H_{(i+1)\alpha-1}\left(k-r-\rho(s),(j-1)r\right)\phi(s)\nabla s \\
     & +{\displaystyle\sum\limits_{i=0}^{\infty}}
{\displaystyle\sum\limits_{j=0}^{p}}\binom{i+1}{j}M^{i+1-j}N^j \int_{0}^{k} H_{(i+1)\alpha-1}\left(k-r-\rho(s),(j-1)r\right)\daleth(s,z(s))\nabla s.
  \end{align*}}
\end{proposition}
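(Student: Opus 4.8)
The plan is to specialize the general representation of Theorem \ref{mainlasttheorem} to the commutative situation $MN=NM$ and then simply read off the coefficient matrices in closed form. The one new ingredient is the explicit shape of the perturbation matrices: under commutativity the lemma preceding this section gives $Q(i+1,j)=\binom{i+1}{j}M^{i+1-j}N^{j}$, so that the reexpression (\ref{ddp2}) of the discrete DPML function (with $\beta=\alpha$, hence $i\alpha+\alpha-1=(i+1)\alpha-1$) reduces to
\[
{\mathbb{D}_{\alpha,\alpha,r}^{M,N}}(k)=\sum_{i=0}^{\infty}\sum_{j=0}^{p}\binom{i+1}{j}M^{i+1-j}N^{j}\,H_{(i+1)\alpha-1}\bigl(k,(j-1)r\bigr),\qquad k\in\mathbb{N}_{(p-1)r+1}^{pr}.
\]
Substituting this into the three summands of (\ref{totalsolution}) will produce the four asserted lines, and the fact that the resulting $z(k)$ solves (\ref{sorkoknonhomogeneous2}) is inherited verbatim from Theorem \ref{mainlasttheorem}, so no re-verification of the equation is needed.

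Concretely, I would proceed term by term. Inserting the displayed expansion into ${\mathbb{D}_{\alpha,\alpha,r}^{M,N}}(k)\phi(1-r)$ reproduces the first line directly. For the middle integral I would split $\nabla_{-r}^\alpha\phi(s)-M\phi(s)$ into its two pieces, pull the constant matrices $\binom{i+1}{j}M^{i+1-j}N^{j}$ outside the $\nabla s$-integral, and interchange the $s$-integration with the $i,j$-summation. The piece carrying $\nabla_{-r}^\alpha\phi$ gives the second line at once; in the piece carrying $-M\phi$ the extra left factor $M$ is absorbed by commutativity, $M\cdot M^{i+1-j}N^{j}=M^{i+2-j}N^{j}$, which is precisely what raises the power to $M^{i+2-j}$ in the third line. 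Finally, expanding ${\mathbb{D}_{\alpha,\alpha,r}^{M,N}}(k-r-\rho(s))$ inside $\int_{0}^{k}$ yields the fourth line. Collecting the four contributions gives the claimed formula.

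The point demanding genuine care is that the upper summation bound $p$ is not literally constant inside the integrals: as $s$ ranges over the domain of integration, the argument $k-r-\rho(s)$ of the discrete DPML function passes through several of the delay blocks $\mathbb{N}_{(q-1)r+1}^{qr}$, so a priori the inner $j$-sum should terminate at the block index of $k-r-\rho(s)$ rather than at the block index $p$ of $k$. I would dispose of this by showing that writing the single fixed bound $p$ throughout introduces no error: every surplus term has either $j>i+1$, in which case $\binom{i+1}{j}=0$ (equivalently $Q(i+1,j)=\Theta$ by Definition \ref{d-ddp}), or an argument satisfying $k-r-\rho(s)-(j-1)r\le 0$, a nonpositive integer, in which case the generalized rising factorial, and hence $H_{(i+1)\alpha-1}\bigl(k-r-\rho(s),(j-1)r\bigr)$, vanishes. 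Thus the uniformly written $\sum_{j=0}^{p}$ agrees termwise with the correct block-dependent sum. The remaining interchange of the infinite $i$-series with the finite $j$-sum and the finite $\nabla$-integral is justified by absolute convergence of the Mittag-Leffler--type series. This bookkeeping over vanishing terms, rather than any substantive estimate, is the main (and essentially only) obstacle.
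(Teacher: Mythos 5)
Your proposal is correct and follows exactly the route the paper intends: Proposition \ref{proposition1} is presented there as a direct reformulation of Theorem \ref{mainlasttheorem} obtained by inserting the commutative formula $Q(i+1,j)=\binom{i+1}{j}M^{i+1-j}N^{j}$ into the expansion (\ref{ddp2}) of ${\mathbb{D}_{\alpha,\alpha,r}^{M,N}}$, with no further argument supplied. Your extra bookkeeping on the block-dependent upper bound of the $j$-sum (surplus terms vanishing because $\binom{i+1}{j}=0$ or because the rising factorial is zero at nonpositive integer arguments) is in fact more care than the paper itself provides.
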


\begin{remark}
Proposition \ref{proposition1} is novel for matrix nabla RL fractional delayed difference system with commutative coefficient matrices.
\end{remark}

\begin{example}
  Let us reconsider delta analogue of the nabla fractional delayed difference system (\ref{sorkok2}) via the relation $\nabla z(k)=\Delta z(k-1)$. In this context, Theorem \ref{mainlasttheorem} may be reformulated as follows.
\end{example}

\begin{proposition}\label{proposition2}
The exact analytical solution of the delta fractional delayed difference system has the following form
   \begin{align}\label{totalsolution}
    z(k)&={\mathbb{D}_{\alpha,\alpha,r}^{M,N}}(k-1)\phi(1-r)
    +\int_{-r}^{-1}{\mathbb{D}_{\alpha,\alpha,r}^{M,N}}\left(k-1-r-s\right) N\phi(s) \Delta s \nonumber\\
     & +\int_{1}^{k} {\mathbb{D}_{\alpha,\alpha,r}^{M,N}}\left(k-r-s\right)\daleth(s-1)\Delta s.
  \end{align}
\end{proposition}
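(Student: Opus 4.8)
The plan is to obtain the delta representation not by re-deriving everything from scratch, but by \emph{translating} the already-established nabla solution of Theorem~\ref{mainlasttheorem} into the delta setting through the bridging identity $\nabla z(k)=\Delta z(k-1)$. The delta analogue of (\ref{sorkok2}) is precisely the $\Delta$-form of the nabla system obtained from this identity, so its solution should be the $\Delta$-form of the nabla solution. I would therefore take the three-term formula of Theorem~\ref{mainlasttheorem}, rewrite each nabla object as its delta counterpart, and check that the result is exactly (\ref{totalsolution}).

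First I would record the bookkeeping identities that drive the whole computation: the summation conventions $\int_a^b f(s)\,\nabla s=\sum_{s=a+1}^b f(s)$ and $\int_a^b f(s)\,\Delta s=\sum_{s=a}^{b-1} f(s)$, which combine into $\int_a^b f(s)\,\nabla s=\int_a^b f(s+1)\,\Delta s$, together with the operator-level relation $\nabla z(k)=\Delta z(k-1)$. Applying these term by term to the nabla solution turns every $\rho(s)=s-1$ occurring in a kernel argument into $s$, shifts the leading argument of ${\mathbb{D}}_{\alpha,\alpha,r}^{M,N}$ from $k$ to $k-1$, and — after reindexing — sends the integration limits to $[-r,-1]$ and $[1,k]$ and carries the forcing over to $\daleth(s-1)$. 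This reproduces the leading term and the forcing integral of (\ref{totalsolution}).

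The delicate step — and the one I expect to be the main obstacle — is the history integral, where the nabla integrand $\nabla_{-r}^{\alpha}\phi(s)-M\phi(s)$ must be shown to collapse, after the shift and reindexing, to the clean expression $N\phi(s)$ appearing in (\ref{totalsolution}). Here I would exploit the support structure of the kernel from Definition~\ref{d-ddp}, namely ${\mathbb{D}}_{\alpha,\alpha,r}^{M,N}\equiv\Theta$ on $\mathbb{N}^{-r-1}$ and ${\mathbb{D}}_{\alpha,\alpha,r}^{M,N}(-r)=I$, exactly as in the vanishing arguments of Lemma~\ref{lemmanonhomogeneous} and the proof of Theorem~\ref{maintheoremnonhomogeneous}: on the history window the delayed sample falls outside the active range, so the zero-kernel annihilates all but the delay coupling $N\phi(s)$. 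Keeping the operator shift, the integration bounds, the kernel argument, and the forcing index mutually consistent throughout these cancellations is the crux of the argument.

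Finally, having matched all three translated summands with (\ref{totalsolution}), I would invoke Theorem~\ref{mainlasttheorem}: since the untranslated function solves the nabla system (\ref{sorkoknonhomogeneous2}) and the shift $k\mapsto k-1$ intertwines $\nabla_{-r}^{\alpha}$ with the delta fractional difference via $\nabla z(k)=\Delta z(k-1)$, the translated object (\ref{totalsolution}) satisfies the delta delayed difference system together with its correspondingly shifted initial data. The superposition structure already used in Theorem~\ref{mainlasttheorem} lets the homogeneous, history, and forcing parts be transformed independently, so beyond the index bookkeeping no further verification is needed.
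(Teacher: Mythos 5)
Your overall strategy --- carrying Theorem~\ref{mainlasttheorem} over to the delta setting through $\nabla z(k)=\Delta z(k-1)$ and the nabla-to-delta summation conventions --- is exactly what the paper intends: Proposition~\ref{proposition2} is stated there without proof, as a direct ``reformulation'' of Theorem~\ref{mainlasttheorem}. Your treatment of the leading term and of the forcing integral (shift $k\mapsto k-1$, $\rho(s)\mapsto s$, reindexed limits, $\daleth(s)\mapsto\daleth(s-1)$) is the right bookkeeping for those two pieces.

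The gap is in the history integral, precisely where you locate the difficulty. A pure index translation of the second summand of Theorem~\ref{mainlasttheorem} produces an integrand of the form $\bigl(\text{fractional difference of }\phi\bigr)(s)-M\phi(s)$ in delta notation; it cannot produce $N\phi(s)$, because the expression $\nabla_{-r}^\alpha\phi(s)-M\phi(s)$ contains no factor of $N$ to begin with, and the support properties of ${\mathbb{D}_{\alpha,\alpha,r}^{M,N}}$ (vanishing on $\mathbb{N}^{-r-1}$, equal to $I$ at $-r$) can only annihilate terms, not manufacture the matrix $N$. The mechanism you invoke --- ``the zero-kernel annihilates all but the delay coupling $N\phi(s)$'' --- therefore does not close the argument. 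The $N\phi(s)$ form arises from a genuinely different decomposition of the solution: one treats the delayed term $Nz(k-r)$ for $k-r$ in the history window as a known part of the forcing (so the history contributes $N\phi(s)$ convolved with the kernel, alongside $\daleth$), rather than encoding the history through $\nabla_{-r}^\alpha\phi-M\phi$ as in Theorem~\ref{mainlasttheorem}. To complete your proof you would either have to re-derive the history contribution from that forcing-type decomposition (and show it agrees with the translated nabla formula), or verify directly that the right-hand side of (\ref{totalsolution}) satisfies the delta system on the initial window and on each interval $\mathbb{N}_{(p-1)r+1}^{pr}$, in the style of the induction used for Theorems~\ref{maintheorem1} and~\ref{maintheoremnonhomogeneous}. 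As written, the equality of your translated integrand with $N\phi(s)$ is asserted but not established.
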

\begin{remark}
Here are some of our findings.
\begin{enumerate}
\item Proposition \ref{proposition2} is novel for the delta fractional delayed difference system with commutative coefficient matrices.
  \item  Theorem \ref{mainlasttheorem}  matches up with \cite[Theorem 3.6]{i8}, for $\alpha=\beta=1$, $r=h+1$,
  \item Theorem \ref{mainlasttheorem} coincides  with \cite[Theorem 3.5]{iek7}, for $\alpha=\beta=1$, $r=h+1$, and $MN=NM$.
\end{enumerate}
\end{remark}

\section{Conclusion}
  In the current paper, we introduce the nabla Riemann Liouville fractional retarded difference system with the noncommutative coefficients. In the sequel, we novelly launch out the discrete delayed perturbation of the nabla M-L type matrix function, investigate its properties, determine some relations between it and the available ones related to it, and graphically compare with some of the present ones. We investigate solutions of homogeneous and nonhomogeneous versions of the RL fractional delayed difference system brick by brick and present their solutions based on the discrete retarded perturbation of the M-L function, and lastly discuss some special cases.

  There are a lot of studies whose examples can be seen in references cited in the next sentence to be done  on this paper. It can be questionable whether the RL fractional retarded difference system given in (\ref{solution2}) is   the relative controllable\cite{c1} \cite{c2}\cite{c3}\cite{c4}, iterative learning controllable\cite{c5}\cite{c6}, exponential stable\cite{c11}\cite{c12}, asymptotic stable\cite{c13}\cite{c14}\cite{c15}, finite-time stable\cite{c16}, etc. Our system can be extended to some different versions as higher-order linear discrete version\cite{c7}\cite{c8} \cite{c9},  multi-retarded version\cite{c10}, and variable coefficients' version, and all of just aforementioned properties can be examined for each of these new systems.

\vspace{0.5cm}
\textbf{The conflict of interest statement:} The authors declare that they have no conflicts of interest to state(declare).

\end{document}